\begin{document}
\newtheorem{theo}{Theorem}
\newtheorem{defin}[theo]{Definition}
\newtheorem{rem}[theo]{Remark}
\newtheorem{lemme}[theo]{Lemma}
\newtheorem{cor}[theo]{Corollary}
\newtheorem{prop}[theo]{Proposition}
\newtheorem{exa}[theo]{Example}
\newtheorem{exas}[theo]{Examples}
%
%
\subjclass[2010]{Primary 37J45  Secondary 35B38 70H05}
\keywords{ Hamiltonian systems, Homoclinic orbits, Ground state, Generalized fountain theorems, Generalized Nehari manifold.}
\thanks{}
\title[Homoclinic orbits of first-order superquadratic ...]{Homoclinic orbits of first-order superquadratic  Hamiltonian systems}

\author[C. J. Batkam]{Cyril J. Batkam}
\address{Cyril J. Batkam \newline
D\'epartement de math\'ematiques,
\newline
Universit\'e de Sherbrooke,
\newline
Sherbrooke, (Qu\'ebec),
\newline
J1K 2R1, CANADA.
\newline {\em Fax:} (819)821-7189.}
\email{cyril.joel.batkam@Usherbrooke.ca}
\maketitle

\begin{abstract}
In this article, we study the existence of homoclinic orbits for the first-order Hamiltonian system
\begin{equation*}
    J\dot{u}(t)+\nabla H(t,u(t))=0,\quad t\in\mathbb{R}.
\end{equation*}
Under the Ambrosetti-Rabinowitz's superquadraticy condition, or no Ambrosetti-Rabinowitz's superquadracity condition, we present two results on the existence of infinitely many large energy homoclinic orbits when $H$ is even in $u$. We apply the generalized (variant) fountain theorems due to the author and Colin. Under no Ambrosetti-Rabinowitz's superquadracity condition, we also obtain the existence of a ground state homoclinic orbit by using the method of the generalized Nehari manifold for strongly indefinite functionals developed by Szulkin and Weth.
\end{abstract}
%


\section{Introduction}
In this paper, we are interrested in the existence and also the multiplicity of homoclinic orbits of the first order Hamiltonian system
\begin{equation}\label{hamilto}\tag{HS}
     J\dot{u}(t)+\nabla H(t,u(t))=0,\quad t\in\mathbb{R},
\end{equation}
where $u\in\mathbb{R}^{2N}$, $J=\left(
           \begin{array}{cc}
             0 & -I_N \\
             I_N & 0 \\
           \end{array}
         \right)
$ is the standard symplectic structure on $\mathbb{R}^{2N}$, $H:\mathbb{R}\times \mathbb{R}^{2N}\rightarrow \mathbb{R}$ is $1-$periodic with respect to the $t-$variable and $\nabla H$ is the gradient of $H$ with respect to $u$. We consider the case which $H$ has the form
$$H(t,u)=\frac{1}{2}A(t)u\cdot u+W(t,u),$$
where the \textbf{dot} denotes the inner product of $\mathbb{R}^{2N}$, $A:\mathbb{R}\rightarrow\mathbb{R}^{4N^2}$ is a $2N\times2N$ symmetric matrix-valued function and $W:\mathbb{R}\times\mathbb{R}^{2N}\rightarrow\mathbb{R}$ satisfy the following conditions.
\begin{enumerate}
  \item [$(A_0)$] $A\in\mathcal{C}^1(\mathbb{R},\mathbb{R}^{4N^2})$ is $1-$periodic with respect to $t$ and $0$ lies in a gap of the spectrum $\sigma(L)$ of $L:=-J\frac{d}{dt}-A(t)$.\\
  \item [$(W_1)$] $W\in\mathcal{C}^1(\mathbb{R}\times\mathbb{R}^{2N},\mathbb{R})$ is $1-$periodic with respect to $t$ and $W(t,0)=0$ for every $t\in \mathbb{R}$.\\
  \item [$(W_2)$] There exist $c>0$, $2< p< \infty$ such that $|\nabla W(t,u)|\leq c\big(1+b(t)|u|^{p-1}\big)$, where $b>0$, $b\in L^{\infty}(\mathbb{R})\cap L^r(\mathbb{R})$, $\frac{1}{r}+\frac{p}{s}=1$ with $2<s<\infty$.\\
  \item [$(W_3)$] $|\nabla W(t,u)|=\circ(|u|)$ as $|u|\rightarrow0$ uniformly in $t$.
\end{enumerate}
 By homoclinic orbit of \eqref{hamilto} we mean a solution $u$ satisfying
 $$ u\neq0\quad \textnormal{and}\quad u(t)\rightarrow0 \textnormal{ as } t\rightarrow\infty.$$
\par In recent years, the existence and multiplicity of homoclinic orbits for the first order system \eqref{hamilto} were studied extensively by means of critical point theory, see for instance \cite{Chen-Ma,Ding-Jean,Ding-Wil,Ding-Gi,Szul-Zou,Ding-Lee,Ding,Rabi-Tana,Coti.al,Sere,Tan,Wang.al}. In their seminal paper \cite{Coti.al}, Coti Zelati, Ekeland and S\'{e}r\'{e}  first considered \eqref{hamilto} with $A$ a constant matrix, $0$ is not a spectrum point of the Hamiltonian operator $L=-J\frac{d}{dt}-A(t)$, and $W(t,u)$ is convex in $u$ and satisfies the Ambrosetti-Rabinowitz's condition
\begin{equation}\label{AR}\tag{AR}
    \exists\mu>2,\quad 0<\mu W(t,u)\leq u\cdot\nabla W(t,u),\quad\forall u\neq0,
\end{equation}
which is extensively used in the study of superquadratic Hamiltonian systems. They proved the existence of two geometrically distinct homoclinic orbits for \eqref{hamilto}. Subsequently, S\'{e}r\'{e} \cite{Sere} obtained the existence of infinitely homoclinic orbits for \eqref{hamilto} under more general assumption on $W$. In \cite{Tan}, Tanaka removed the convexity assumption and obtained the existence of at least one homoclinic orbit by using a subharmonic approach.
\par In this paper we first show, under condition \eqref{AR}, that \eqref{hamilto} has infinitely many homoclinic orbits. We apply the generalized fountain theorem for strongly indefinite functionals established by the author and Colin \cite{Bat-Col1}. More precisely, we have the following result.
\begin{theo}\label{theo1}
Assume that $(A_0)$, $(W_1)-(W_4)$ are satisfied. If in addition
\begin{enumerate}
 \item [$(W_4)$] $W(t,-u)=W(t,u)$,
  \item[$(W_5)$] $ \quad \exists \mu>\max\{2,p-1\},\, \exists\delta>0 \,\, ;\,\, \delta|u|^\mu\leq\mu W(t,u)\leq u\cdot \nabla W(t,u),$
\end{enumerate}
then \eqref{hamilto} has infinitely many large energy homoclinic solutions.
\end{theo}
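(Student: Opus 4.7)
I would recover the homoclinic orbits as critical points of
$$\Phi(u)=\tfrac12\langle Lu,u\rangle_{L^{2}}-\int_{\mathbb R}W(t,u)\,dt$$
on the Hilbert space $E=\mathcal D(|L|^{1/2})$ equipped with the inner product $(u,v)_{E}=(|L|^{1/2}u,|L|^{1/2}v)_{L^{2}}$.  By $(A_{0})$ the spectrum of the self-adjoint operator $L=-J\tfrac{d}{dt}-A(t)$ has a gap around $0$, which produces an orthogonal splitting $E=E^{+}\oplus E^{-}$ into two infinite-dimensional subspaces on which $L$ is positive, resp.\ negative, so that
$$\Phi(u)=\tfrac12\|u^{+}\|^{2}-\tfrac12\|u^{-}\|^{2}-\int_{\mathbb R}W(t,u)\,dt.$$
The growth bound $(W_{2})$ combined with the balance $\tfrac{1}{r}+\tfrac{p}{s}=1$ and the continuous embeddings $E\hookrightarrow L^{q}(\mathbb R,\mathbb R^{2N})$ for $q\in[2,\infty)$ make $\Phi\in C^{1}(E,\mathbb R)$, and $(W_{1})$, $(W_{3})$ ensure that critical points of $\Phi$ are homoclinic orbits.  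The evenness assumption $(W_{4})$ makes $\Phi$ even, so that $\Phi$ is a candidate for a fountain-type theorem.

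\textbf{Fountain geometry.}  Choose a Hilbert basis $\{e_{k}\}_{k\ge1}$ of $E^{+}$ and set
$$Y_{k}=E^{-}\oplus\mathrm{span}\{e_{1},\ldots,e_{k}\},\qquad Z_{k}=\overline{\mathrm{span}\{e_{j}:j\ge k\}}\subset E^{+}.$$
For the generalized fountain theorem of \cite{Bat-Col1} I would verify two geometric conditions.  \emph{Lower bound on $Z_{k}$:} for $u\in Z_{k}$ one has $u=u^{+}$; applying $(W_{2})$--$(W_{3})$ gives $W(t,u)\le\varepsilon|u|^{2}+C_{\varepsilon}b(t)|u|^{p}$, whence H\"older yields $\Phi(u)\ge\tfrac12\|u\|^{2}-\varepsilon\beta^{2}\|u\|^{2}-C_{\varepsilon}\alpha_{k}^{p}\|u\|^{p}$ with $\alpha_{k}:=\sup\{\|u\|_{L^{s}}:u\in Z_{k},\,\|u\|=1\}\to 0$; optimizing the radius $r_{k}$ in $\|u\|$ forces $\inf_{\|u\|=r_{k},\,u\in Z_{k}}\Phi(u)\to\infty$.  \emph{Upper bound on $Y_{k}$:} by a contradiction argument, if a sequence $u_{n}\in Y_{k}$ with $\|u_{n}\|\to\infty$ satisfied $\Phi(u_{n})>0$, normalize $v_{n}=u_{n}/\|u_{n}\|$; its $\mathrm{span}\{e_{1},\ldots,e_{k}\}$-component converges strongly (finite-dimensional), while the superquadratic lower bound $W(t,u)\ge\tfrac{\delta}{\mu}|u|^{\mu}$ from $(W_{5})$ gives $\int W(t,u_{n})/\|u_{n}\|^{2}\to\infty$ on the support of the limit, contradicting $\Phi(u_{n})/\|u_{n}\|^{2}>0$.

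\textbf{Compactness (the main obstacle).}  The genuinely delicate point is the $(C)_{c}^{*}$ compactness required by the generalized fountain theorem, since $\mathbb R$ is non-compact and $E^{-}$ is infinite-dimensional.  For boundedness of a Cerami sequence $(u_{n})$ I would combine the AR identity
$$\mu\Phi(u_{n})-\Phi'(u_{n})u_{n}=\big(\tfrac{\mu}{2}-1\big)\big(\|u_{n}^{+}\|^{2}-\|u_{n}^{-}\|^{2}\big)+\int_{\mathbb R}\big(\nabla W(t,u_{n})\cdot u_{n}-\mu W(t,u_{n})\big)\,dt$$
with the tested relation $\Phi'(u_{n})(u_{n}^{+}-u_{n}^{-})=\|u_{n}\|^{2}-\int\nabla W(t,u_{n})\cdot(u_{n}^{+}-u_{n}^{-})\,dt$; the integrand in the first identity is nonnegative by $(W_{5})$ and yields an $L^{\mu}$ bound, which — via $(W_{2})$ and H\"older with exponents $r,\,s/(p-1),\,s$ (legal thanks to $\tfrac{1}{r}+\tfrac{p}{s}=1$) — feeds into the second identity to give $\|u_{n}\|\le\mathrm{const}$.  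Convergence of a subsequence then follows from the classical $\mathbb Z$-translation argument of Coti~Zelati--Rabinowitz--S\'er\'e: should the weak limit vanish, the $1$-periodicity of $A$ and $W$ permits replacing $u_{n}$ by integer translates to avoid vanishing in the sense of P.\,L.~Lions, producing a nontrivial limit on a shifted subsequence.  A standard diagonal argument then upgrades this to the $(C)_{c}^{*}$-property for the filtration by the $Y_{k}$.  Once compactness and the two geometric conditions are in hand, the generalized fountain theorem of \cite{Bat-Col1} delivers a sequence of critical points $u_{k}$ with $\Phi(u_{k})\to\infty$, which is the claimed family of large-energy homoclinic orbits.
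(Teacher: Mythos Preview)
Your overall strategy coincides with the paper's --- the same Batkam--Colin fountain theorem, the same decomposition $Y_k=E^-\oplus\mathrm{span}\{e_1,\dots,e_k\}$ and $Z_k\subset E^+$, the same use of $(W_5)$ for the upper geometry and for bounding Palais--Smale sequences, and the same $\mathbb Z$-translation at the end --- but two steps fail as written.

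First, the claim $\alpha_k:=\sup\{\|u\|_{L^s}:u\in Z_k,\ \|u\|=1\}\to 0$ is false. One checks easily that $\alpha_k\to 0$ is \emph{equivalent} to compactness of the embedding $E^+\hookrightarrow L^s(\mathbb R,\mathbb R^{2N})$, which does not hold on the whole line. The paper works instead with the \emph{weighted} quantity $\beta_k:=\sup_{u\in Z_k,\,\|u\|=1}\bigl(\int_{\mathbb R} b(t)|u|^p/p\,dt\bigr)^{1/p}$ and proves $\beta_k\to 0$ by splitting $\mathbb R$ into a large bounded interval (where local compactness applies) and its complement (where $\|b\|_{L^r}$ is small). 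The hypothesis $b\in L^r(\mathbb R)\cap L^\infty(\mathbb R)$ in $(W_2)$ is exactly what replaces the missing compactness; without it condition $(A_2)$ cannot be verified.

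Second, no $(C)_c^\ast$-type condition can hold here: the $\mathbb Z$-action by time shifts leaves $\Phi$ invariant, so Palais--Smale fails at every nonzero critical level, and no diagonal argument over the filtration restores it. The fountain theorem used in the paper does not assume compactness; it only produces, for each $k$, a $(PS)_{c_k}$-sequence $(u_k^n)_n$. The substantive step your outline omits is showing that the translated weak limit $w_k$ actually sits \emph{at level $c_k$}. The paper first excludes vanishing (otherwise $c_k=0$, contradicting $c_k\ge b_k\to\infty$), then argues that the translated sequence converges in $L^p$, and finally invokes the Brezis--Lieb lemma to pass to the limit in $\Phi(w_k^n)=\langle\Phi'(w_k^n),w_k^n\rangle+\tfrac12\int w_k^n\cdot\nabla W(t,w_k^n)\,dt-\int W(t,w_k^n)\,dt$, obtaining $\Phi(w_k)=c_k\to\infty$. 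Without this identification you get nontrivial critical points but no control on their energies, so neither ``infinitely many'' nor ``large energy'' is justified.
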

\begin{rem}
The existence of infinitely many homoclinic orbits of \eqref{hamilto} under $(A_0)$ and $(W_1)-(W_4)$ was first proved by Ding and Girardi \cite{Ding-Gi} \big(see also \cite{Bar-Szu}\big) by using a generalized linking theorem. They allowed $0$ to be an end point of the spectrum $\sigma(L)$. However, they assumed in addition that
\begin{equation*}
    \exists c_0,\varepsilon_0>0\,\, ;\,\, |\nabla W(t,u+v)-\nabla W(t,u)|\leq c_0|v|(1+|u|^{p-1}),\,\textnormal{ whenever }|v|\leq\varepsilon_0.
\end{equation*}
Moreover, we do not know if the homoclinic orbits they obtained are large energy
solutions of \eqref{hamilto}.
\end{rem}
\par It is well known that condition \eqref{AR} is mainly used, in superquadratic problems, to assure the boundedness of the Palais-Smale sequences of the energy functional, and without it the problem becomes more complicated.
 By applying a generalized linking theorem in the spirit of Krysewski and Szulkin \cite{K-S},  Wang et al. \cite{Wang.al}  obtained, without condition \eqref{AR}, the existence of at least one homoclinic orbit of \eqref{hamilto}. Subsequently, by replacing condition \eqref{AR} with a general superquadratic condition, Chen and Ma \cite{Chen-Ma} obtained the existence of at least one homoclinic orbit of \eqref{hamilto} which is a ground state solution, that is a non zero solution which least energy. They adapted an earlier argument used by Yang \cite{Yang} in the study of ground state  solutions for a semilinear Schr\"{o}dinger equation with periodic potential. By replacing in this paper \eqref{AR} with a general superquadratic condition, we also prove the following results.
 \begin{theo}\label{theo2}
Assume that $(A_0)$, $(W_1)-(W_4)$ are satisfied. Assume in addition that $W$ satisfies the following conditions.
\begin{itemize}
  \item [$(W_6)$] $\big(v\cdot\nabla W(t,u)\big)(u\cdot v)\geq0$.\\
  \item [$(W_7)$] $\exists \gamma>2$ such that $\frac{W(t,u)}{|u|^\gamma}\rightarrow\infty$ as $|u|\rightarrow\infty$, uniformly in $t$.\\
  \item [$(W_8)$] $W(t,u)>0$ and $u\cdot\nabla W(t,u)>2W(t,u)$, $\forall u\neq0$.\\
  \item [$(W_9)$] if $|u|=|v|$, then $W(t,u)=W(t,v)$ and $v\cdot\nabla W(t,u)\leq u\cdot\nabla W(t,u)$, with strict inequality if $u\neq v$.\\
  \item [$(W_{10})$] $|u|\neq |v|$ and $u\cdot v\neq0$ $\Rightarrow$ $v\cdot \nabla W(t,u)\neq u\cdot\nabla W(t,v)$.\\
\end{itemize}
Then \eqref{hamilto} has infinitely many large energy homoclinic solutions.
\end{theo}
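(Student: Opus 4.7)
The proof follows the same global strategy as Theorem \ref{theo1}: the absence of \eqref{AR} forces one to work with Cerami sequences and to extract boundedness from the milder conditions $(W_7)$--$(W_8)$, so I apply the $\mathbb{Z}_2$-equivariant variant fountain theorem of Batkam--Colin \cite{Bat-Col1} for strongly indefinite functionals.

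First I set up the variational framework. Let $E := D(|L|^{1/2})$, endowed with its canonical inner product, and decompose $E = E^+ \oplus E^-$ according to the spectral splitting of $L$; by $(A_0)$ both subspaces are infinite dimensional. Under $(W_1)$--$(W_3)$ and $(W_7)$ the energy functional
$$\Phi(u) = \tfrac{1}{2}\|u^+\|^2 - \tfrac{1}{2}\|u^-\|^2 - \int_{\mathbb{R}} W(t,u(t))\,dt$$
belongs to $C^1(E,\mathbb{R})$, its critical points are the homoclinic orbits of \eqref{hamilto}, and by $(W_4)$ it is even. Picking an orthonormal basis $\{e_j\}_{j \ge 1}$ of $E^+$, I set $X_j = \mathbb{R} e_j$, $Y_k = E^- \oplus \bigoplus_{j\le k} X_j$, and $Z_k = \overline{\bigoplus_{j\ge k} X_j}$, the standard filtration used in the strongly indefinite fountain machinery.

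Next I verify the two geometric hypotheses of the variant fountain theorem. For the linking condition on $Z_k$ I use $(W_2)$--$(W_3)$ to derive $|W(t,u)| \le \varepsilon |u|^2 + C_\varepsilon b(t)|u|^p$, combined with the decay $\beta_k := \sup\{(\int b|u|^p\,dt)^{1/p} : u \in Z_k,\ \|u\|=1\} \to 0$, to exhibit radii $\rho_k \to \infty$ and levels $a_k \to \infty$ such that $\inf\{\Phi(u) : u \in Z_k,\ \|u\| = \rho_k\} \ge a_k$. For the opposite condition on the finite-dimensional $Y_k$, I exploit the general superquadratic growth $(W_7)$: since $W(t,u)/|u|^\gamma \to \infty$ as $|u| \to \infty$ and all norms on $Y_k$ are equivalent, there exist $r_k > \rho_k$ with $\sup\{\Phi(u) : u \in Y_k,\ \|u\| = r_k\} \le 0$.

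The main obstacle is verifying the Cerami compactness condition without recourse to \eqref{AR}. Let $(u_n)$ be a Cerami sequence at level $c > 0$. The crucial step is to prove boundedness: assume for contradiction that $\|u_n\| \to \infty$, set $v_n := u_n / \|u_n\|$, and extract $v_n \rightharpoonup v$ in $E$. If $v \neq 0$, then $|u_n(t)| \to \infty$ a.e.\ on $\{v \neq 0\}$, so $(W_7)$ and Fatou's lemma force $\int W(t,u_n)/\|u_n\|^2\,dt \to +\infty$, contradicting the boundedness of $\Phi(u_n)/\|u_n\|^2$. If $v = 0$, I apply Lions' concentration--compactness lemma to $v_n$: either $v_n \to 0$ in $L^q_{\mathrm{loc}}(\mathbb{R})$ for every $q \in [2,\infty)$, in which case the identity
$$\int_{\mathbb{R}} \big(u_n \cdot \nabla W(t,u_n) - 2 W(t,u_n)\big)\,dt = 2c + o(1),$$
together with the strict superquadraticity $(W_8)$, yields $c = 0$, a contradiction; or there exist integer translations $k_n$ such that the shifted sequence $v_n(\cdot - k_n)$ converges weakly to some $\tilde v \neq 0$, and the $1$-periodicity of $A$ and $W$ reduces us to the previous case. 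Conditions $(W_6),(W_9),(W_{10})$ enter in a subsidiary way when controlling the nonlinear term after these translations and when ruling out spurious weak limits. Once Cerami sequences are bounded, a further concentration--compactness argument combined with the periodicity of $A$ and $W$ provides a convergent subsequence in the sense required by the variant fountain theorem. The theorem of Batkam--Colin then produces a diverging sequence of critical values $c_k \to +\infty$, hence infinitely many homoclinic orbits of \eqref{hamilto} with arbitrarily large energy.
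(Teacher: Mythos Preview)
Your argument has a genuine gap in the boundedness step, and it stems from bypassing the monotonicity trick that is the core of the paper's proof. In the vanishing case you write that $v_n\to 0$ in $L^q$, combined with the identity $\int(u_n\cdot\nabla W-2W)=2c+o(1)$ and $(W_8)$, forces $c=0$. But $(W_8)$ only tells you the integrand is nonnegative; to conclude $c=0$ you would need $\int(u_n\cdot\nabla W(t,u_n)-2W(t,u_n))\,dt\to 0$, and the vanishing of the \emph{normalized} sequence $v_n=u_n/\|u_n\|$ in $L^q$ does not imply this: the integrand involves $u_n=\|u_n\|v_n$ with $\|u_n\|\to\infty$, so the estimates from $(W_2)$--$(W_3)$ produce terms like $\|u_n\|^p\int b|v_n|^p$, an $\infty\cdot 0$ indeterminacy you cannot resolve. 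This is exactly the obstruction that prevents a direct Cerami-sequence argument here, and it is why the paper does \emph{not} try to prove boundedness of Palais--Smale sequences for $\Phi$ directly.

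Instead the paper applies the variant fountain theorem (from \cite{Bat-Col2}, not \cite{Bat-Col1}) to the perturbed family $\Phi_\lambda(u)=\tfrac12\|Qu\|^2-\lambda\bigl[\tfrac12\|Pu\|^2+\int W\bigr]$, $\lambda\in[1,2]$. That theorem delivers, for a.e.\ $\lambda$, \emph{bounded} Palais--Smale sequences and hence genuine critical points $z_k(\lambda)$ of $\Phi_\lambda$. One then takes $\lambda_n\to 1$ and must show the resulting sequence $(z_k^n)$ stays bounded. The crucial tool is Lemma~\ref{lem}: if $z$ is a \emph{critical point} of $\Phi_\lambda$ then $\Phi_\lambda(z+w)<\Phi_\lambda(z)$ for every $w\in\{rz+v:r\ge -1,\ v\in Y\}$. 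This inequality is precisely where $(W_6)$, $(W_9)$, $(W_{10})$ are used (not in a ``subsidiary way''): they guarantee that the auxiliary scalar function $f(s)$ is strictly negative. With Lemma~\ref{lem} in hand, the vanishing alternative is killed by comparing $\Phi_{\lambda_n}(z_k^n)\ge\Phi_{\lambda_n}(RQw_k^n)\ge R^2/4-o(1)$ for arbitrary $R$, and the nonvanishing alternative by $(W_7)$ and Fatou. Your outline cannot invoke this comparison because your $u_n$ are merely Palais--Smale points, not critical points. A smaller slip: $Y_k=E^-\oplus\bigoplus_{j\le k}\mathbb{R}e_j$ is not finite-dimensional, so ``all norms on $Y_k$ are equivalent'' is false; the paper handles the $a_k(\lambda)\le 0$ estimate via a projection onto the finite-dimensional piece (see the proof of Theorem~\ref{theo1}).
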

\begin{theo}\label{theo3}
If $(A_0)$, $(W_1)-(W_3)$, $(W_6)-(W_{10})$ are satisfied, then \eqref{hamilto} has a homoclinic orbit which is a ground state solution.
\end{theo}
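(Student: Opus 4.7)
The plan is to follow the generalized Nehari manifold method of Szulkin--Weth, adapted to our strongly indefinite setting. Under $(A_0)$, the operator $L=-J\frac{d}{dt}-A(t)$ is selfadjoint on $L^2(\mathbb{R},\mathbb{R}^{2N})$ with $0$ in a gap of $\sigma(L)$, so the form domain $E:=\mathcal{D}(|L|^{1/2})$ splits orthogonally as $E=E^+\oplus E^-$ via the spectral projections of $L$. On $E$ the energy functional reads
\[
\Phi(u)=\tfrac{1}{2}\|u^+\|^2-\tfrac{1}{2}\|u^-\|^2-\int_{\mathbb{R}}W(t,u)\,dt,
\]
and under $(W_1)$--$(W_3)$ it is of class $C^1$, with critical points being homoclinic orbits of \eqref{hamilto}. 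Both $E^+$ and $E^-$ are infinite-dimensional, which is the \emph{strongly indefinite} feature ruling out standard mountain-pass techniques.

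Next I would set up the generalized Nehari manifold
\[
\mathcal{M}:=\{u\in E\setminus E^-\;:\;\Phi'(u)u=0\text{ and }\Phi'(u)v=0\ \forall v\in E^-\},
\]
and show that for each $w\in E^+\setminus\{0\}$ the functional $\Phi$ attains a unique strict global maximum on the half-space $\hat{E}(w):=E^-\oplus\mathbb{R}^+w$, at a point $m(w)\in\mathcal{M}$. Existence of this maximum comes from the linking geometry provided by $(W_3)$ (quadratic smallness near $0$) and the superquadratic growth $(W_7)$, while uniqueness of the maximizer hinges on the structural hypotheses $(W_6)$, $(W_8)$--$(W_{10})$: these replace the strict convexity of $u\mapsto W(t,u)/|u|^2$ used by Szulkin--Weth in the scalar case, and are exactly what one needs to make the Nehari reduction valid without convexity of $W$. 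Letting $S^+:=\{w\in E^+:\|w\|=1\}$, I would then verify that $m:S^+\to\mathcal{M}$ is a homeomorphism and that the reduced functional $\hat\Phi:=\Phi\circ m$ is $C^1$ on $S^+$, with its critical points in one-to-one correspondence with those of $\Phi$ on $E$, and with $\inf_{S^+}\hat\Phi=\inf_{\mathcal{M}}\Phi=:c>0$.

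Applying Ekeland's variational principle to $\hat\Phi$ on $S^+$ yields a minimizing sequence $w_n\in S^+$ with $\hat\Phi'(w_n)\to 0$; setting $u_n:=m(w_n)\in\mathcal{M}$ produces a Palais--Smale sequence for $\Phi$ at level $c$. The boundedness of $(u_n)$ in $E$ is extracted from the identity
\[
\Phi(u_n)=\Phi(u_n)-\tfrac{1}{2}\Phi'(u_n)u_n=\int_{\mathbb{R}}\bigl(\tfrac{1}{2}u_n\cdot\nabla W(t,u_n)-W(t,u_n)\bigr)dt,
\]
combined with $(W_8)$ (which makes the integrand nonnegative and coercive through $(W_7)$). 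The \textbf{main obstacle} is the extraction of a nonzero weak limit: because the problem is posed on all of $\mathbb{R}$ the embeddings $E\hookrightarrow L^q(\mathbb{R})$ are not compact, and $\Phi$ is not bounded below.

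To close this gap I would exploit the $\mathbb{Z}$--translation invariance guaranteed by the $1$-periodicity of $A$ and $W$ in $t$. A P.\,L. Lions-type vanishing/non-vanishing dichotomy, together with the positivity of $c$, rules out $L^s$-vanishing of $u_n$ on unit intervals, so there exist integers $k_n$ such that $\tilde u_n:=u_n(\cdot+k_n)$ is still a Palais--Smale sequence at level $c$ and, up to a subsequence, $\tilde u_n\rightharpoonup u$ in $E$ with $u\neq 0$. A standard density/weak continuity argument then gives $\Phi'(u)=0$. Finally, Fatou's lemma applied to the nonnegative integrand $\frac{1}{2}u\cdot\nabla W(t,u)-W(t,u)$ yields $\Phi(u)\le c$, while $u\in\mathcal{M}$ forces $\Phi(u)\ge c$; hence $\Phi(u)=c$ and $u$ is the desired ground state homoclinic orbit. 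The most delicate steps to verify carefully will be the uniqueness of $m(w)$ on $\hat E(w)$ under $(W_6)$--$(W_{10})$ without convexity, and closing the concentration argument in the strongly indefinite frame where only $\mathbb{Z}$-translation symmetry is available.
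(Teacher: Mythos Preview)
Your overall strategy coincides with the paper's: both use the Szulkin--Weth generalized Nehari manifold, reduce to a $C^1$ functional $\Psi$ on $S^+$, apply Ekeland's principle, lift the resulting Palais--Smale sequence to $\mathcal{M}$, restore compactness via $\mathbb{Z}$-translations and Lions' lemma, and conclude by Fatou on the nonnegative integrand $\tfrac{1}{2}u\cdot\nabla W-W$. The verification of the Nehari hypotheses (existence and uniqueness of the maximizer on $\widehat E(w)$ from $(W_6)$--$(W_{10})$) is also handled the same way.

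There is, however, a genuine gap in your boundedness step. You write that boundedness of $(u_n)$ ``is extracted from the identity
\[
\Phi(u_n)=\int_{\mathbb{R}}\bigl(\tfrac{1}{2}u_n\cdot\nabla W(t,u_n)-W(t,u_n)\bigr)dt,
\]
combined with $(W_8)$ (which makes the integrand nonnegative and coercive through $(W_7)$).'' But $(W_7)$ and $(W_8)$ give \emph{no quantitative lower bound} on $\tfrac{1}{2}u\cdot\nabla W-W$: the inequality $u\cdot\nabla W>2W$ is strict but not uniform, and $W/|u|^\gamma\to\infty$ says nothing about this difference. So a bound on the integral does not translate into control of $\|u_n\|$. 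This is precisely the point where the absence of the Ambrosetti--Rabinowitz condition bites, and it is the main reason Theorem~\ref{theo3} is nontrivial.

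The paper closes this gap by a Jeanjean-type indirect argument (Lemma~\ref{bounded}): assume $\|u_n\|\to\infty$, set $w_n:=u_n/\|u_n\|$, and run a vanishing/nonvanishing dichotomy on $(w_n^+)$. In the nonvanishing case, after translation $w_n\rightharpoonup w$ with $w^+\neq 0$, and then $(W_7)$ plus Fatou forces $\Phi(u_n)/\|u_n\|^2\to-\infty$, contradicting $\Phi(u_n)\to c>0$. The vanishing case is where the Nehari structure is essential: since $u_n\in\mathcal{M}$, the maximality property (Lemma~\ref{lem}) gives $\Phi(u_n)\geq\Phi(Rw_n^+)$ for every $R>0$; vanishing of $w_n^+$ in $L^p$ makes $\int W(t,Rw_n^+)\to 0$, while $\|w_n^+\|^2\geq\tfrac12$, so $c\geq R^2/4$ for all $R$, a contradiction. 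You should replace your boundedness paragraph by this argument; the rest of your outline is correct and matches the paper.
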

\begin{rem}
In Theorem \ref{theo3}, the assumptions $(W_2)$ and $(W_7)$ can be weaken by taking $b\equiv1$ and $\gamma=2$ respectively.
\end{rem}
As far as we know Theorem \ref{theo2} is new. It will be proved by using the generalized variant fountain theorem due to author and Colin \cite{Bat-Col2}, which combines the $\tau-$topology of Kryszewski and Szulkin \cite{K-S} with the idea of the monotonicity trick for strongly indefinite functionals inspired by Jeanjean \cite{LJJ}. Theorem \ref{theo3} was first proved by Chen and Ma \cite{Chen-Ma}. They applied a generalized weak linking theorem due to Schechter and Zou \cite{S-Zou}. In this paper we follow a different approach, which is based on the method of the generalized Nehari manifold for strongly indefinite functionals inspired by Pankov \cite{Pan}, and developed recently by Szulkin and Weth \cite{Sz-W,S-W}. This approach is much more direct and simpler.
\par The paper is organized as follows. The variational framework for the study of \eqref{hamilto} will be stated in section \ref{section1}, while the existence of infinitely many large energy homoclinic orbits will be proved in Section \ref{section2}. Finally, in Section \ref{section3} we apply the method of the generalized Nehari manifold to find a ground state homoclinic orbit of \eqref{hamilto}.

%

\section{Variational setting}\label{section1}
Let $X:=H^{\frac{1}{2}}(\mathbb{R},\mathbb{R}^{2N})$ be the fractional Sobolev space of functions $u\in L^2(\mathbb{R},\mathbb{R}^{2N})$ such that
\begin{equation*}
    \int_\mathbb{R}(1+\xi^2)|\mathcal{F}u(\xi)|^2d\xi<\infty,
\end{equation*}
where $\mathcal{F}$ is the Fourier transform. $X$ is a separable Hilbert space with the inner product
\begin{equation*}
    \big<u,v\big>_{\frac{1}{2}}:=\int_\mathbb{R}(1+\xi^2)^{\frac{1}{2}}\mathcal{F}u(\xi)\overline{\mathcal{F}v(\xi)}d\xi, \quad u,v\in X.
\end{equation*}
For $q\in[2,+\infty[$, the Sobolev embedding $X\hookrightarrow L^q(\mathbb{R},\mathbb{R}^{2N})$ is continuous and the embedding $X\hookrightarrow L_{loc}^q(\mathbb{R},\mathbb{R}^{2N})$ is compact \big(see for example \cite{Adams} or \cite{Stuart}\big).\\
Consider the operator $B:X\rightarrow X$ defined by
\begin{equation*}
   \big<Bu,v\big>_{\frac{1}{2}}:=\int_\mathbb{R} \big(-J\dot{u}-A(t)u\big)\cdot vdt.
\end{equation*}
By assumption $(A_0)$, $L=-J\frac{d}{dt}-A(t): L^2(\mathbb{R},\mathbb{R}^{2N})\rightarrow L^2(\mathbb{R},\mathbb{R}^{2N})$ is a selfadjoint bounded operator with domain $D(L)=H^1(\mathbb{R},\mathbb{R}^{2N})$, and the spectrum is unbounded below and above in $H^1(\mathbb{R},\mathbb{R}^{2N})$ \big(see \cite{Stuart}\big). Hence, the space $X$ has the orthogonal decomposition $X=X^+\oplus X^-$, where $X^{\pm}$ are infinite dimensional $B-$invariant subspaces such that the quadratic form $u\in X\mapsto\big<Bu,u\big>$ is negative on $X^-$ and positive on $X^+$. Therefore we can define a new equivalent inner product on $X$ by setting
\begin{equation*}
    \big<u,v\big>:= \big<Bu^+,v^+\big>_{\frac{1}{2}}- \big<Bu^-,v^-\big>_{\frac{1}{2}},\quad u^{\pm},v^\pm\in X^\pm.
\end{equation*}
If $\|\cdot\|$ denotes the corresponding norm, then we have
\begin{equation*}
\int_\mathbb{R} \big(-J\dot{u}-A(t)u\big)\cdot udt=\|u^+\|^2-\|u^-\|^2,\quad \forall u\in X.
\end{equation*}
\par We define on $X$ the functional
\begin{equation*}
     \Phi(u):=\frac{1}{2}\int_\mathbb{R} \big(-J\dot{u}-A(t)u\big)\cdot udt-\int_\mathbb{R}W(t,u)dt.
\end{equation*}

Then
\begin{equation}\label{phii}
    \Phi(u):=\frac{1}{2}\|u^+\|^2-\frac{1}{2}\|u^-\|^2-\int_\mathbb{R}W(t,u)dt.
\end{equation}
\begin{prop}[\cite{Bar-Szu}, Proposition $3.1$]\label{prop}
If $(W_1)$, $(W_2)$ and $(W_3)$ are satisfied, then $\Phi\in \mathcal{C}^1(X,\mathbb{R})$, with
\begin{equation*}
     \big<\Phi'(u),v\big>=\big<u^+,v\big>- \big<u^-,v\big>-\int_\mathbb{R}v\cdot\nabla W(t,u)dt.
\end{equation*}
Moreover, $u\in X$ is a homoclinic orbit of \eqref{hamilto} if and only if it is a non zero critical point of $\Phi$.
\end{prop}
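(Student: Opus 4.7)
The plan is to prove the statement in three stages: smoothness of the quadratic part of $\Phi$, smoothness of the nonlinear piece $\Psi(u):=\int_\mathbb{R} W(t,u)\,dt$, and the equivalence between nonzero critical points of $\Phi$ and homoclinic orbits of \eqref{hamilto}. Thanks to \eqref{phii}, the quadratic piece $Q(u)=\tfrac12\|u^+\|^2-\tfrac12\|u^-\|^2$ is $C^\infty$ on $X$, with $\langle Q'(u),v\rangle=\langle u^+,v\rangle-\langle u^-,v\rangle$ since $X^+\perp X^-$ in the new inner product. Thus all the real work lies in $\Psi$.

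The key technical estimate is obtained by combining $(W_2)$ and $(W_3)$: for every $\varepsilon>0$ there is $C_\varepsilon>0$ with
\[
|\nabla W(t,u)|\le \varepsilon|u|+C_\varepsilon b(t)|u|^{p-1},\qquad |W(t,u)|\le \tfrac{\varepsilon}{2}|u|^2+\tfrac{C_\varepsilon}{p}b(t)|u|^p.
\]
Since $X\hookrightarrow L^q(\mathbb{R},\mathbb{R}^{2N})$ continuously for every $q\in[2,\infty)$ and $b\in L^r$ with $\tfrac{1}{r}+\tfrac{p}{s}=1$, Hölder's inequality gives $\int b(t)|u|^p\,dt\le \|b\|_r\|u\|_s^p<\infty$, so $\Psi$ is well defined on $X$. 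To obtain the Gâteaux derivative $\langle \Psi'(u),v\rangle=\int v\cdot\nabla W(t,u)\,dt$, I would differentiate under the integral: the difference quotient $h^{-1}\bigl(W(t,u+hv)-W(t,u)\bigr)$ is dominated, uniformly in small $h$, by an $L^1$ majorant built from the bound above, so Lebesgue's theorem applies. Continuity of $\Psi'$ reduces to a Nemytskii-type argument: if $u_n\to u$ in $X$, then $u_n\to u$ in $L^2\cap L^s$ and, along a subsequence, almost everywhere; the same pointwise bound then supplies a uniform majorant for $|\nabla W(\cdot,u_n)-\nabla W(\cdot,u)|$ and another dominated convergence argument delivers $\Psi'(u_n)\to\Psi'(u)$ in $X^*$.

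For the equivalence with homoclinic orbits, observe that by construction of $B$ and by $B$-invariance of $X^\pm$,
\[
\langle u^+,v\rangle-\langle u^-,v\rangle=\langle Bu,v\rangle_{1/2}=\int_\mathbb{R}\bigl(-J\dot u-A(t)u\bigr)\cdot v\,dt.
\]
Hence $\Phi'(u)=0$ amounts to $\int_\mathbb{R}\bigl(-J\dot u-A(t)u-\nabla W(t,u)\bigr)\cdot v\,dt=0$ for every $v\in X$, and testing against $v\in C_c^\infty(\mathbb{R},\mathbb{R}^{2N})$ yields the distributional equation $J\dot u+\nabla H(t,u)=0$. A short bootstrap closes the argument: the pointwise bound above, together with $u\in L^q$ for every $q\in[2,\infty)$ and $b\in L^\infty$, implies $\nabla W(\cdot,u)\in L^2$, so $\dot u\in L^2$, hence $u\in H^1(\mathbb{R},\mathbb{R}^{2N})\hookrightarrow C_0(\mathbb{R},\mathbb{R}^{2N})$; in particular $u(t)\to 0$ as $|t|\to\infty$, and a nonzero critical point is precisely a homoclinic orbit.

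The main obstacle is the Nemytskii continuity of $u\mapsto\nabla W(\cdot,u)$ in the mixed-exponent setting with the weight $b(t)$: one must handle the $\varepsilon|u|$ and the $b(t)|u|^{p-1}$ parts separately, keeping uniform control so that dominated convergence can be invoked on both pieces simultaneously. Once that estimate is in place, the three assertions of the proposition follow routinely.
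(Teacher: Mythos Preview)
The paper does not give its own proof of this proposition; it is quoted verbatim from Bartsch--Szulkin \cite{Bar-Szu}, Proposition~3.1, with no argument supplied. Your outline is correct and is essentially the standard proof one finds in that reference: smoothness of the quadratic part is immediate, the growth estimate obtained by combining $(W_2)$ and $(W_3)$ makes the Nemytskii map $u\mapsto\nabla W(\cdot,u)$ continuous between the appropriate Lebesgue spaces (hence $\Psi\in\mathcal{C}^1$ by dominated convergence), and the bootstrap $H^{1/2}\to H^1\hookrightarrow C_0$ for critical points recovers the decay $u(t)\to 0$. One small caveat: the inequality $|\nabla W(t,u)|\le \varepsilon|u|+C_\varepsilon b(t)|u|^{p-1}$ with arbitrarily small $\varepsilon$ does not quite follow from $(W_2)$ as written (the constant ``$1$'' in $(W_2)$ cannot be absorbed into $b(t)|u|^{p-1}$ when $b$ is small), but the weaker bound $|\nabla W(t,u)|\le C|u|+C b(t)|u|^{p-1}$, which is what your argument actually needs here, is immediate from $(W_2)$, $(W_3)$ and $b\in L^\infty$.
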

\par Due to the periodicity of $A$ and $W$, if $u=u(t)$ is a homoclinic orbit of \eqref{hamilto}, so are all $g\star u$, $g\in \mathbb{Z}$, where
\begin{equation*}
    g\star u(t):=u(t-g).
\end{equation*}
Therefore the functional $\Phi$ cannot satisfy the Palais-Smale condition at any critical level $c\neq0$. We recall that a functional $\varphi\in \mathcal{C}^1(X,\mathbb{R})$ is said to satisfy the Palais-Smale condition \big(resp. the Palais-Smale condition at level $c\in\mathbb{R}$\big), if every sequence $(u_n)\subset X$ such that $(\varphi(u_n))_n$ is bounded \big(resp. $\varphi(u_n)\rightarrow c$\big) and $\varphi'(u_n)\rightarrow0$, admits a convergent subsequence. Two homoclinic orbits $u$ and $v$ of \eqref{hamilto} are said to be geometrically distinct if the sets $\{g\star u\,;\,g\in\mathbb{Z}\}$ and $\{g\star v\,;\,g\in\mathbb{Z}\}$ are disjoint.

\section{The existence of infinitely many homoclinic orbits}\label{section2}
\subsection{Generalized (variant) fountain theorems}
Let $Y$ be a closed subspace of a separable Hilbert space $X$ endowed with the inner product $(\cdot)$ and the associated norm $\|\cdot\|$.
We denote by $P:X\rightarrow Y$ and $Q:X\rightarrow Z:=Y^\perp$ the orthogonal projections.
\par{} We fix an orthonormal basis $(a_j)_{j\geq0}$ of $Y$ and we consider on $X=Y\oplus Z$ the $\tau-$topology introduced by Kryszewski and Szulkin in \cite{K-S}; that is the topology associated to the following norm
\begin{equation*}
    \vvvert u\vvvert:=\max\Big(\sum\limits_{j=0}^{\infty}\frac{1}{2^{j+1}}|(Pu,a_j)|,\|Qu\|\Big),\,\, u\in X.
\end{equation*}
$\tau$ has the following interesting property \big(see \cite{K-S} or \cite{W}\big): If $(u_n)\subset X$ is a bounded sequence, then
\begin{equation*}
     u_n
\stackrel{\tau}{\rightarrow}u  \Longleftrightarrow Pu_n \rightharpoonup Pu \,\ and \,\ Qu_n \rightarrow Qu.
\end{equation*}

Let $(e_j)_{j\geq0}$ be an orthonormal basis of $Z$. We adopt the following notations:
$$Y_k:=Y\oplus(\oplus_{j=0}^k{\mathbb R} e_j)\quad\quad \textnormal{ and }\quad\quad  Z_k:=  \overline{\oplus_{j=k}^\infty\mathbb{R} e_j }. $$
$$ \displaystyle B_k:=\{u\in Y_k \, \bigl | \, ||u||\leq \rho_k \bigr.\}, \,\,\,\ N_k:=\{u\in Z_k \, \bigl | \, ||u||=r_k \bigr. \} \,\,\ where\,\,\,\ 0<r_k< \rho_k, \,\ k\geq2.   $$
The following abstract critical point theorems are due to the author and F. Colin.
\begin{theo}[Fountain theorem, Batkam-Colin \cite{Bat-Col1}]\label{ft}
 Let $\Phi\in\mathcal{C}^1(X,{\mathbb R})$ be an even functional which is $\tau$-upper semicontinuous and such that $\Phi'$ is weakly sequentially continuous. If there exist $\rho_k>r_k>0$ such that:
             \begin{itemize}
               \item [$(A_1)$]\,\,\,\  {$a_k \, := \, \displaystyle \sup_{\substack{u \in Y_k \\ \|u\| = \rho_k}} \Phi(u) \le 0 $ \,\,\ \textnormal{and} \,\,\ $\displaystyle \sup_{\substack{u \in Y_k \\ \|u\| \leq \rho_k}} \Phi(u)<\infty $}.
               \item [$(A_2)$] \,\,\,\ {$b_k \, := \, \displaystyle \inf_{\substack{u \in Z_k \\ \|u \| = r_k}} \Phi(u) \rightarrow \infty, \, k \rightarrow \infty$.}
             \end{itemize}
Then
\begin{equation*}
    c_{k} :=  \inf_{\gamma \in \Gamma_{k}} \sup_{u\in B_{k}}\Phi(\gamma(u))\,\geq\,b_k,
\end{equation*}
 and there exists a sequence $(u_k^n)_n\subset X$ such that
\begin{align*}
\Phi'(u_k^n)  \to 0 \quad \textnormal{and}\quad \Phi(u_k^n)  \to c_{k} \,\,\,\ as \,\ n\to\infty,
\end{align*}
where
$\Gamma_k$ is the set of maps $\gamma:B_k\rightarrow X$ such that
\begin{itemize}
  \item [(a)] $\gamma$ is odd and $\tau-$continuous, and $\gamma_{\mid_{\partial B_k}}=id,$
  \item [(b)] every $u\in int(B_k)$ has a $\tau-$neighborhood $N_u$ in $Y_k$ such that $(id-\gamma)(N_u\cap int(B_k))$ is contained in a finite-dimensional subspace of $X$,
   \item [(c)] $\Phi(\gamma(u))\leq\Phi(u)$ $\forall u\in B_k$.
\end{itemize}
\end{theo}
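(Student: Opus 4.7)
The plan is to adapt the classical minimax proof of the Bartsch fountain theorem, with the crucial modification that the norm topology be replaced by the $\tau$-topology of Kryszewski--Szulkin in the deformation step, so that the argument survives for strongly indefinite $\Phi$. The proof splits naturally into a linking (intersection) statement, which yields $c_k\geq b_k$, and a $\tau$-deformation argument, which produces the Palais--Smale sequence.

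For the linking, I would first observe that $\Gamma_k\neq\emptyset$, since $id_{B_k}$ satisfies $(a)$--$(c)$, so $c_k$ is well defined. The geometric core is the claim that $\gamma(B_k)\cap N_k\neq\emptyset$ for every $\gamma\in\Gamma_k$. Condition $(b)$ says that $id-\gamma$ is a locally finite-dimensional perturbation of the identity, while oddness together with $\gamma_{\mid_{\partial B_k}}=id$ presents $\gamma$ as an odd map of a ball fixing its boundary. One then applies a Borsuk-type degree/intersection argument to the $\tau$-continuous map $u\mapsto(P\gamma(u),\|Q\gamma(u)\|-r_k)$ through finite dimensional approximations, exactly as in \cite{K-S}, producing $u_0\in B_k$ with $P\gamma(u_0)=0$ and $\|Q\gamma(u_0)\|=r_k$, i.e.\ $\gamma(u_0)\in N_k$. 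Combined with $(A_2)$ this gives $c_k\geq b_k$.

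For the Palais--Smale sequence at level $c_k$, I would argue by contradiction. If no such sequence exists, a quantitative $\tau$-deformation lemma provides, for some small $\varepsilon>0$, an odd $\tau$-continuous map $\eta:X\to X$ with $\eta=id$ outside $\Phi^{-1}([c_k-2\varepsilon,c_k+2\varepsilon])$, with $id-\eta$ locally finite-dimensional, with $\Phi\circ\eta\leq\Phi$, and sending $\{\Phi\leq c_k+\varepsilon\}$ into $\{\Phi\leq c_k-\varepsilon\}$. Choosing $\gamma\in\Gamma_k$ with $\sup_{B_k}\Phi\circ\gamma<c_k+\varepsilon$, one verifies that $\eta\circ\gamma\in\Gamma_k$: oddness and the identity on $\partial B_k$ pass through the composition; $id-\eta\circ\gamma=(id-\gamma)+(id-\eta)\circ\gamma$ stays locally finite-dimensional; $\tau$-continuity composes; and $(c)$ follows from $\Phi\circ\eta\leq\Phi$ together with $(c)$ for $\gamma$. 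The resulting bound $\sup_{B_k}\Phi\circ(\eta\circ\gamma)\leq c_k-\varepsilon$ contradicts the definition of $c_k$.

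The main obstacle is the construction of $\eta$. A standard norm-continuous pseudo-gradient flow is not $\tau$-continuous on unbounded sets, and its time-one map is not a locally finite-dimensional perturbation of the identity, so $\eta\circ\gamma$ would fail $(b)$. Following Kryszewski--Szulkin, the fix is to build a pseudo-gradient of the form $V(u)=Qu+K(u)$, where $K$ is a locally Lipschitz vector field taking values in a locally finite-dimensional subspace of $X$; the induced flow then differs from the explicit linear flow of $u\mapsto Qu$ on $Z$ by a locally finite-dimensional correction, which is precisely what is needed for $\tau$-continuity of the time-one map and preservation of the class $\Gamma_k$. Verifying that this vector field also yields the required quantitative descent on the bad sublevel set, using $\tau$-upper semicontinuity of $\Phi$ and weak sequential continuity of $\Phi'$ to translate weak information into strong estimates, is the technical heart of the argument; once it is in place, the minimax scheme outlined above closes in the standard way.
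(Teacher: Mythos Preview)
The paper does not contain a proof of this theorem: it is quoted verbatim from \cite{Bat-Col1} as an abstract tool and then applied. So there is no ``paper's own proof'' to compare your proposal against.

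That said, your outline is essentially the strategy used in \cite{Bat-Col1} (and ultimately in \cite{K-S}): an intersection lemma $\gamma(B_k)\cap N_k\neq\emptyset$ via a Borsuk-type argument exploiting the locally finite-dimensional structure of $id-\gamma$, followed by a $\tau$-deformation lemma built from a Kryszewski--Szulkin pseudo-gradient vector field whose flow is a $\tau$-continuous, locally finite-dimensional perturbation of the identity. One small inaccuracy: the pseudo-gradient is not of the form $V(u)=Qu+K(u)$ with $K$ finite-dimensional; rather, one constructs $V$ via a $\tau$-locally finite partition of unity so that $id-\eta$ (for the time-$t$ map $\eta$) is locally finite-dimensional, not $V$ itself. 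Also, to ensure $\eta=id$ on $\partial B_k$ you need $a_k\le 0<b_k-2\varepsilon$, which is where $(A_1)$ enters; you should make that explicit. Otherwise your sketch captures the correct architecture.
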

\begin{theo}[Variant fountain theorem, Batkam-Colin \cite{Bat-Col2}]\label{vft}
Let the family of $\mathcal{C}^1$-functionals
\begin{equation*}
    \Phi_\lambda:X\rightarrow\mathbb{R},\quad\Phi_\lambda(u):=L(u)-\lambda J(u), \,\,\,\,\,\,\,\,\ \lambda\in[1,2],
\end{equation*}
such that
 \begin{enumerate}
\item [$(B_1)$] $\Phi_\lambda$ maps bounded sets to bounded sets uniformly for $\lambda\in[1,2]$, and $\Phi_\lambda(-u)=\Phi_\lambda(u)$ for every $(\lambda,u)\in[1,2]\times X$.
\item [$(B_2)$] $J(u)\geq0$ for every $u\in X$; $L(u)\rightarrow\infty$ or $J(u)\rightarrow\infty$ as $\|u\|\rightarrow\infty$.
\item [$(B_3)$] For every $\lambda\in[1,2]$, $\Phi_\lambda$ is $\tau$-upper semicontinuous  and $\Phi'_\lambda$ is weakly sequentially continuous.
\end{enumerate}
If there are $0<r_k<\rho_k$ such that
\begin{equation*}
      b_k(\lambda) \, := \, \displaystyle \inf_{\substack{u \in Z_k \\ \|u \| = r_k}}\Phi_\lambda(u)\,\,\geq\,\,a_k(\lambda) \, := \, \displaystyle \sup_{\substack{u \in Y_k \\ \|u\| = \rho_k}} \Phi_\lambda(u) \,\,\forall\lambda\in[1,2],
\end{equation*}
 then
\begin{equation*}
     c_{k}(\lambda) :=  \inf_{\theta \in \Theta_{k}(\lambda)} \sup_{u\in B_{k}} \Phi_\lambda
\bigl( \theta(u)  \bigr)\,\geq\,b_k(\lambda)\,\,\, \forall\lambda\in[1,2].
\end{equation*}
 Moreover, for a.e $\lambda\in[1,2]$ there exists a sequence $(u_k^n(\lambda))_n\subset X$ such that
\begin{equation*}
    \sup_{\substack{n}}\|u_k^n(\lambda)\|<\infty, \quad \Phi'_\lambda (u_k^n(\lambda))\rightarrow 0 \,\ and \,\ \Phi_\lambda(u_k^n(\lambda))\rightarrow c_k(\lambda)\,\,\textnormal{as}\,\,n\rightarrow\infty.
\end{equation*}
Where
$\Theta_k(\lambda)$ is the class of maps $\theta:B_k\rightarrow X$ such that
\begin{itemize}
  \item [(a)] $\theta$ is odd and $\tau-$continuous, and $\theta_{\mid_{\partial B_k}}=id,$
  \item [(b)] every $u\in int(B_k)$ has a $\tau-$neighborhood $N_u$ in $Y_k$ such that $(id-\theta)(N_u\cap int(B_k))$ is contained in a finite-dimensional subspace of $X$,
   \item [(c)] $\Phi_\lambda(\theta(u))\leq\Phi_\lambda(u)$ $\forall u\in B_k$.
\end{itemize}
\end{theo}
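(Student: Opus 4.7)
The plan is to combine three ingredients: an intersection/linking argument within the $\tau$-topology of Kryszewski-Szulkin, the monotonicity trick of Jeanjean, and a $\tau$-equivariant deformation lemma. The statement splits naturally into two parts: the lower bound $c_k(\lambda)\geq b_k(\lambda)$ for every $\lambda$, and the existence, for almost every $\lambda$, of a bounded Palais-Smale sequence at level $c_k(\lambda)$.

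First I would establish the intersection property $\theta(B_k)\cap N_k\neq\emptyset$ for every $\theta\in\Theta_k(\lambda)$. By property (a), $\theta$ is odd, $\tau$-continuous, and fixes $\partial B_k$ pointwise; by (b), $\mathrm{id}-\theta$ has locally finite-dimensional range. These are exactly the axioms that let one define a Galerkin-type degree for $\theta$ relative to the decomposition $Y_k=Y\oplus\bigoplus_{j=0}^{k}\mathbb{R}e_j$, and a standard linking computation (in the spirit of Kryszewski-Szulkin, analogous to the one needed for Theorem \ref{ft}) shows that $\theta(B_k)$ must meet the sphere $N_k\subset Z_k$. Taking $\sup$ on the left and $\inf$ on the right then yields $c_k(\lambda)\geq b_k(\lambda)$.

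Next I would run Jeanjean's monotonicity trick. By $(B_2)$, $J\geq 0$, so $\lambda\mapsto\Phi_\lambda(u)$ is non-increasing for each fixed $u$; hence $\lambda\mapsto c_k(\lambda)$ is non-increasing on $[1,2]$ and differentiable almost everywhere. At a point of differentiability $\lambda_0$, pick $\lambda_n\nearrow\lambda_0$ and $\theta_n\in\Theta_k(\lambda_n)$ nearly optimal. Using the identity
\begin{equation*}
\Phi_{\lambda_n}(\theta_n(u))-\Phi_{\lambda_0}(\theta_n(u))=(\lambda_0-\lambda_n)\,J(\theta_n(u))
\end{equation*}
together with the boundedness of the difference quotients $(c_k(\lambda_n)-c_k(\lambda_0))/(\lambda_0-\lambda_n)$, I would control $J(\theta_n(u))$, and then $L(\theta_n(u))$ via $(B_2)$, uniformly on the subset of $B_k$ where $\Phi_{\lambda_0}\circ\theta_n$ is close to $c_k(\lambda_0)$. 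A diagonal selection then produces a bounded sequence $(v_n)\subset X$ with $\Phi_{\lambda_0}(v_n)\to c_k(\lambda_0)$ and $\Phi'_{\lambda_0}(v_n)\to 0$.

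The main obstacle will be this last step: passing from a merely bounded approximate-critical sequence to a genuine bounded $(PS)_{c_k(\lambda_0)}$ sequence requires a $\tau$-equivariant deformation lemma compatible with the admissible class $\Theta_k(\lambda_0)$. Arguing by contradiction, if no such Palais-Smale sequence existed near the level set, condition $(B_3)$ (\emph{$\tau$-upper semicontinuity} of $\Phi_\lambda$ and weak sequential continuity of $\Phi'_\lambda$) would allow the construction of an odd pseudogradient vector field whose flow is $\tau$-continuous and has locally finite-dimensional displacement, following the Kryszewski-Szulkin and Bartsch-Ding template. Composing a near-optimal $\theta$ with a short time of this flow would produce a new element of $\Theta_k(\lambda_0)$ lowering $\sup_{B_k}\Phi_{\lambda_0}\circ\theta$ strictly below $c_k(\lambda_0)$, a contradiction. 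The delicate technical point, and the reason the norm topology cannot replace the $\tau$-topology, is checking that property (b) is preserved under this composition, which in turn relies on the bounded-sequence characterization $u_n\stackrel{\tau}{\to}u\Leftrightarrow Pu_n\rightharpoonup Pu,\ Qu_n\to Qu$.
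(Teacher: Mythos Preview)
The paper does not contain a proof of this theorem. Theorem~\ref{vft} is quoted verbatim from the authors' companion paper \cite{Bat-Col2} and is used here purely as a black box to prove Theorem~\ref{theo2}; no argument for it appears anywhere in the present article. Consequently there is nothing in the paper to compare your proposal against.

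That said, your sketch is the expected strategy for a result of this type: the intersection property $\theta(B_k)\cap N_k\neq\emptyset$ via a Kryszewski--Szulkin degree gives $c_k(\lambda)\geq b_k(\lambda)$; the monotonicity of $\lambda\mapsto c_k(\lambda)$ (from $J\geq 0$) yields a.e.\ differentiability; and at points of differentiability one extracts a bounded $(PS)_{c_k(\lambda)}$ sequence by combining the difference-quotient bound with a $\tau$-equivariant deformation argument. One point deserves more care than you give it: the admissible class $\Theta_k(\lambda)$ depends on $\lambda$ through condition (c), so the monotonicity of $\lambda\mapsto c_k(\lambda)$ is not entirely automatic and the choice of which class the near-optimal maps $\theta_n$ are taken from must be made consistently with the direction $\lambda_n\to\lambda_0$. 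This is handled in \cite{Bat-Col2}, but it is the step where a loose sketch can go wrong.
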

\par In the following two subsections, we set $Y=X^+$ and $Z=X^-$.
\subsection{The case of Ambrosetti-Rabinowitz condition}
In this subsection we, assume that $(A_0)$, $(W_1)-(W_5)$ are satisfied.\\
The functional $\Phi$ reads as follows:
\begin{equation}\label{phi}
    \Phi(u)=\frac{1}{2}\|Qu\|^2-\frac{1}{2}\|Pu\|^2-\int_\mathbb{R}W(t,u)dt.
\end{equation}
We know from Proposition \ref{prop} that $\Phi$ is of class $\mathcal{C}^1$ on $X$ and
\begin{equation}\label{phiprime}
    \big<\Phi'(u),v\big>=\big<Qu,v\big>- \big<Pu,v\big>-\int_\mathbb{R}v\cdot\nabla W(t,u)dt.
\end{equation}
We have the following lemma.
\begin{lemme}
$\Phi$ is $\tau-$upper semicontinuous on $X$, and $\Phi'$ is weakly sequentially continuous.
\end{lemme}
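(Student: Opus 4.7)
The plan is to verify the two claims separately. Both reduce, after peeling off the bilinear terms in (\ref{phi})--(\ref{phiprime}), to controlling the nonlinear integrals $\int_\mathbb{R} W(t,u)\,dt$ and $\int_\mathbb{R} v\cdot\nabla W(t,u)\,dt$ under the relevant mode of convergence. The key inputs are: (a) for bounded sequences, $u_n\xrightarrow{\tau}u$ is equivalent to $Pu_n\rightharpoonup Pu$ and $Qu_n\to Qu$; (b) the continuous embeddings $X\hookrightarrow L^q(\mathbb R)$ for $q\in[2,\infty)$ and the compact embeddings $X\hookrightarrow L^q_{loc}(\mathbb R)$; (c) the growth condition $(W_2)$, whose exponents $r,s,p$ with $\frac{1}{r}+\frac{p}{s}=1$ are designed precisely so that tail integrals over $\{|t|>R\}$ can be dominated by $\|b\|_{L^r(\{|t|>R\})}$ times a quantity bounded on bounded subsets of $X$.

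For the $\tau$-upper semicontinuity, I would take a bounded sequence $u_n\xrightarrow{\tau}u$; by (a), local compactness and a diagonal extraction also give $u_n\to u$ a.e.\ on $\mathbb R$ along a subsequence. In the quadratic part of (\ref{phi}), $\tfrac12\|Qu_n\|^2\to\tfrac12\|Qu\|^2$ by strong convergence, while weak lower semicontinuity of the norm yields $\limsup_n(-\tfrac12\|Pu_n\|^2)\le-\tfrac12\|Pu\|^2$. For the nonlinear term, $(W_5)$ forces $W(t,u)\ge 0$, so Fatou's lemma gives $\liminf_n\int_\mathbb R W(t,u_n)\,dt\ge\int_\mathbb R W(t,u)\,dt$. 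Combining these three facts produces $\limsup_n\Phi(u_n)\le\Phi(u)$.

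For the weak sequential continuity of $\Phi'$, fix $v\in X$ and $u_n\rightharpoonup u$ in $X$. The bilinear terms in (\ref{phiprime}) converge by definition of weak convergence, so everything reduces to the claim $\int_\mathbb R v\cdot\nabla W(t,u_n)\,dt\to\int_\mathbb R v\cdot\nabla W(t,u)\,dt$. I split the integral at $|t|=R$. On the compact interval $[-R,R]$, the compact embedding gives $u_n\to u$ in $L^q([-R,R])$ for every $q\in[2,\infty)$ and, along a subsequence, a.e.\ convergence; together with the pointwise majoration $|v\cdot\nabla W(t,u_n)|\le c|v|\bigl(1+b(t)|u_n|^{p-1}\bigr)$ from $(W_2)$, a Vitali equi-integrability argument passes the limit inside. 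On the tail $\{|t|>R\}$, Hölder's inequality with exponents $r$, $s$ and $s/(p-1)$ (whose reciprocals sum to $1$ by $(W_2)$) gives
\begin{equation*}
\Big|\int_{|t|>R}b(t)\,|v|\,|u_n|^{p-1}\,dt\Big|\le\|b\|_{L^r(\{|t|>R\})}\,\|v\|_{L^s(\mathbb R)}\,\|u_n\|_{L^s(\mathbb R)}^{p-1},
\end{equation*}
and since $b\in L^r(\mathbb R)$ and $(u_n)$ is bounded in $X\hookrightarrow L^s(\mathbb R)$, this is $o(1)$ as $R\to\infty$ uniformly in $n$. Sending $n\to\infty$ first and then $R\to\infty$ closes the argument.

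The only delicate point, and the main obstacle, is precisely this uniform tail control on $\mathbb R$: weak convergence in $X$ provides essentially no decay information, and $\nabla W$ need not decay in $t$ either, so a single dominated convergence argument on all of $\mathbb R$ is not available. The weighted growth condition $(W_2)$ with $b\in L^r(\mathbb R)$ and matched Hölder exponents is rigged exactly to supply this estimate, after which both conclusions of the lemma follow in a routine manner.
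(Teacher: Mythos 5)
Your handling of the $\tau$-upper semicontinuity silently assumes the sequence is bounded, but the definition (that the level sets $\{\Phi\geq C\}$ are $\tau$-closed) does not grant this, and the equivalence you quote ($u_n\stackrel{\tau}{\rightarrow}u$ iff $Pu_n\rightharpoonup Pu$ and $Qu_n\to Qu$) is valid only for bounded sequences. The missing step, which the paper supplies, is to \emph{derive} boundedness: $u_n\stackrel{\tau}{\rightarrow}u$ gives $Qu_n\to Qu$ strongly, so $(Qu_n)$ is bounded, and since $W\geq 0$ (forced by $(W_5)$) the inequality $\Phi(u_n)\geq C$ yields $\|Pu_n\|^2\leq\|Qu_n\|^2-2C$, bounding $(Pu_n)$. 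Only after that can one pass to $u_n\rightharpoonup u$ in $X$ and apply Fatou and weak lower semicontinuity of the norm as you do; without it your argument leaves open the possibility of an unbounded $\tau$-convergent sequence along $\{\Phi\geq C\}$, which is precisely what the hypothesis $W\geq0$ is there to rule out.

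For the weak sequential continuity you take a genuinely different route from the paper --- a direct tail estimate at $|t|=R$ rather than testing $\Phi'(u_n)$ against $v\in C_c^\infty(\mathbb{R},\mathbb{R}^{2N})$ and then invoking density --- but as written the tail estimate is incomplete. The majoration $|v\cdot\nabla W(t,u_n)|\leq c|v|\bigl(1+b(t)|u_n|^{p-1}\bigr)$ from $(W_2)$ also produces the term $c\int_{|t|>R}|v|\,dt$, which your H\"older estimate does not address; this quantity need not be small as $R\to\infty$, and may even be infinite, since $v\in X$ need not lie in $L^1(\mathbb{R})$. The remedy is to combine $(W_2)$ with $(W_3)$ to obtain $|\nabla W(t,u)|\leq\varepsilon|u|+c_\varepsilon b(t)|u|^{p-1}$; the first piece then contributes $\varepsilon\int_{|t|>R}|v||u_n|\,dt\leq\varepsilon\|v\|_{L^2(\{|t|>R\})}\sup_n\|u_n\|_{L^2}\to0$ as $R\to\infty$ uniformly in $n$, and your Hölder bound with exponents $r,s,s/(p-1)$ handles the remaining piece. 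With that correction your approach is sound and in fact more self-contained than the paper's $C_c^\infty$-density argument, which avoids the tail issue by construction but tacitly requires boundedness of $(\Phi'(u_n))$ in $X^*$ to carry out the density extension.
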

\begin{proof}
Let $u_n\stackrel{\tau}{\rightarrow}u$ in $X$ and $\Phi(u_n)\geq C\in\mathbb{R}$. Then, by the definition of $\tau$ we have $Qu_n\rightarrow Qu$, and then $(Qu_n)$ is bounded. Since $W\geq0$, we deduce from the inequality $C\leq\Phi(u_n)$ that $(Pu_n)$ is also bounded. hence, $u_n\rightharpoonup u$ in $X$, $u_n\rightarrow u$ in $L^p_{loc}(\mathbb{R},\mathbb{R}^{2N})$, and up to a subsequence $u_n(t)\rightarrow u(t)$ a.e $t\in\mathbb{R}$. It follows from Fatou's lemma and the weakly semicontinuity of the norm $\|\cdot\|$ that $C\leq\Phi(u)$. Hence $\Phi$ is $\tau-$upper semicontinuous.
\par Now assume that $u_n\rightharpoonup u$ in $X$. Then $u_n\rightarrow u$ in $L^p_{loc}(\mathbb{R},\mathbb{R}^{2N})$, and since $b(t)\leq\|b\|_\infty$ a.e. $t$, we deduce from Theorem $A.2$ of \cite{W} that $\nabla W(t,u_n)\rightarrow\nabla W(t,u)$ in $L^{p/(p-1)}_{loc}(\mathbb{R},\mathbb{R}^{2N})$. hence $\big<\Phi'(u_n),v\big>\rightarrow\big<\Phi'(u),v\big>$ for all $v\in C^\infty_c(\mathbb{R},\mathbb{R}^{2N})$. We then deduce by density that $\Phi'$ is weakly sequentially continuous.
\end{proof}
\begin{lemme}\label{psborne}
Every Palais-Smale sequence for $\Phi$ is bounded.
\end{lemme}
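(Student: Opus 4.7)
Let $(u_n) \subset X$ be a Palais-Smale sequence, so that $|\Phi(u_n)| \leq c_0$ for some $c_0 > 0$ and $\varepsilon_n := \|\Phi'(u_n)\| \to 0$. My plan is to first extract from the superquadratic condition $(W_5)$ a uniform bound on $(u_n)$ in $L^\mu(\mathbb{R}, \mathbb{R}^{2N})$ that grows at most linearly in $\|u_n\|$, and then to combine it with the two identities obtained by testing $\Phi'(u_n)$ against $u_n^+$ and $u_n^-$. The objective is to control each $\|u_n^\pm\|$ by a strictly sub-linear power of $\|u_n\|$, which will force $\|u_n\|$ to stay bounded.

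For the $L^\mu$ estimate, I would compute
$$\Phi(u_n) - \tfrac{1}{2}\langle \Phi'(u_n), u_n \rangle = \int_\mathbb{R} \Big[\tfrac{1}{2} u_n \cdot \nabla W(t, u_n) - W(t, u_n)\Big]\, dt.$$
By $(W_5)$, the integrand is at least $\tfrac{\mu-2}{2} W(t, u_n) \geq \tfrac{(\mu-2)\delta}{2\mu} |u_n|^\mu$, while the left-hand side is bounded above by $c_0 + \tfrac{\varepsilon_n}{2}\|u_n\|$. Hence $\int_\mathbb{R} |u_n|^\mu\, dt \leq C(1 + \|u_n\|)$ for all large $n$.

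Next, testing $\Phi'(u_n)$ against $\pm u_n^\pm$ gives
$$\|u_n^\pm\|^2 \leq \varepsilon_n \|u_n^\pm\| + \int_\mathbb{R} |u_n^\pm|\, |\nabla W(t, u_n)|\, dt.$$
Combining $(W_2)$ (using $b \in L^\infty$) with $(W_3)$ produces, for every $\eta > 0$, a constant $C_\eta$ such that $|\nabla W(t, u)| \leq \eta |u| + C_\eta |u|^{p-1}$; the $\eta$-term contributes at most $\eta C \|u_n^\pm\|\|u_n\|$ by the embedding $X \hookrightarrow L^2(\mathbb{R})$. For the $|u|^{p-1}$ term, I would apply H\"older in conjugate exponents chosen inside the admissible range $[2, \infty)$ for $X \hookrightarrow L^q(\mathbb{R})$ and then interpolate the resulting $L^q$-norm of $u_n$ with the $L^\mu$ bound obtained above. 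Concretely, if $\mu \geq 2(p-1)$, one pairs $\|u_n^\pm\|_{L^2}$ with $\|u_n\|_{L^{2(p-1)}}^{p-1}$ and interpolates $L^{2(p-1)}$ between $L^2$ and $L^\mu$; if $p-1 < \mu < 2(p-1)$, one uses the conjugate exponents $\mu/(p-1)$ and $\mu/(\mu-p+1) \geq 2$ to obtain $\|u_n^\pm\|_{L^{\mu/(\mu-p+1)}} \|u_n\|_{L^\mu}^{p-1}$. A direct computation in either regime shows that the hypothesis $\mu > \max\{2, p-1\}$ from $(W_5)$ is exactly what makes the resulting exponent $\tau$ of $(1 + \|u_n\|)$ strictly less than one, so that
$$\int_\mathbb{R} |u_n^\pm|\, |u_n|^{p-1}\, dt \leq C \|u_n^\pm\|\, (1 + \|u_n\|)^\tau, \qquad \tau \in [0,1).$$

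Plugging this back into the previous inequality and choosing $\eta$ small enough yields $\|u_n^\pm\| \leq \varepsilon_n + \eta C \|u_n\| + C_\eta(1 + \|u_n\|)^\tau$; squaring and summing the two bounds gives $\|u_n\|^2 \leq C(1 + \|u_n\|^{2\tau})$ for large $n$, which forces $(u_n)$ to be bounded since $2\tau < 2$. The main technical hurdle lies in the interpolation step: one must choose the H\"older exponents so that both land in $[2, \infty)$ while ensuring the $L^\mu$ control is fully exploited, which is precisely why the strict inequality $\mu > p - 1$ in $(W_5)$ (rather than merely the usual Ambrosetti-Rabinowitz condition $\mu > 2$) is indispensable.
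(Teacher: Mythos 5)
Your strategy coincides with the paper's: use $(W_5)$ together with $\Phi(u_n)-\tfrac12\langle\Phi'(u_n),u_n\rangle$ to get $\int_{\mathbb R}|u_n|^\mu\,dt\le C(1+\|u_n\|)$, then test $\Phi'(u_n)$ against $u_n^\pm$, split $\nabla W$ via the $\varepsilon|u|+C_\varepsilon|u|^{p-1}$ bound, and close with H\"older, Sobolev embedding and the $L^\mu$ control so that the resulting power of $\|u_n\|$ is strictly sub-quadratic. (Incidentally, the paper writes $\Phi(u_n)-\langle\Phi'(u_n),u_n\rangle$ in that first step; your version with the factor $\tfrac12$ is the correct one, and the constant $\delta$ should in fact be $\delta/\mu$.)

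Where you genuinely improve on the paper is the H\"older step. The paper directly bounds $\int|u_n^\pm||u_n|^{p-1}\,dt$ by $|u_n^\pm|_q\,|u_n|_\mu^{p-1}$ with the conjugate exponent $q=\mu/(\mu-p+1)$ and then invokes $X\hookrightarrow L^q(\mathbb R)$; this embedding requires $q\ge 2$, i.e. $\mu\le 2(p-1)$, which $(W_5)$ does not guarantee. Your two-regime treatment handles the full range of $\mu>\max\{2,p-1\}$: when $p-1<\mu\le 2(p-1)$ the conjugate exponent lies in $[2,\infty)$ and the direct pairing works, giving the exponent $(p-1)/\mu<1$; when $\mu\ge 2(p-1)$ you instead pair against $L^2$ and interpolate $L^{2(p-1)}$ between $L^2$ and $L^\mu$, which yields the exponent $(\mu-p)/(\mu-2)<1$ (using $p>2$). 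In both regimes the closing inequality $\|u_n\|\le\varepsilon_n + \eta C\|u_n\| + C_\eta(1+\|u_n\|)^\tau$ with $\tau<1$ and $\eta$ small forces boundedness. So you take the same route, but your proof is complete over the whole hypothesis range where the paper's single H\"older line is not.
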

\begin{proof}
Let $(u_n)\subset X$ and $d\in\mathbb{R}$ such that $\sup |\Phi(u_n)|\leq d$ and $\Phi'(u_n)\rightarrow0$.\\
It follows from $(W_5)$ that
\begin{equation*}
    \Phi(u_n)-\big<\Phi'(u_n),u_n\big>\geq (\frac{\mu}{2}-1)\delta|u|^\mu_\mu.
\end{equation*}
We deduce that for $n$ big enough
\begin{equation}\label{eq1}
    (\frac{\mu}{2}-1)\delta|u|^\mu_\mu\leq d+\|u_n\|.
\end{equation}
On the other hand $(W_2)$ and $(W_3)$ imply that
\begin{equation}\label{eq2}
    \forall \varepsilon>0, \,\, \exists c(\varepsilon)>0;\,\,\, |\nabla W(t,u)|\leq \varepsilon|u|+c(\varepsilon)|u|^{p-1} \,\, a.e.\,t\in\mathbb{R},\,\, \forall u\in\mathbb{R}^{2N}.
\end{equation}
Hence
\begin{eqnarray*}
   \|Qu_n\|^2 &=& \big<\Phi'(u_n),Qu_n\big>+\int_\mathbb{R}Qu_n\cdot W(t,u_n)dt \\
   &\leq&  \|Qu_n\|+\int_\mathbb{R}Qu_n\cdot W(t,u_n)dt\quad(\textnormal{for $n$ big enough})\\
   &\leq&  \|Qu_n\|+\varepsilon\int_\mathbb{R}|Qu_n||u_n|dt+c(\varepsilon) \int_\mathbb{R}|Qu_n||u_n|^{p-1}dt.
\end{eqnarray*}
By the same way we have
\begin{equation*}
    \|Pu_n\|^2\leq\|Pu_n\|+\varepsilon\int_\mathbb{R}|Pu_n||u_n|dt+c(\varepsilon) \int_\mathbb{R}|Pu_n||u_n|^{p-1}dt.
\end{equation*}
And by using the H\"{o}der inequality and the Sobolev embedding theorem we obtain
\begin{equation*}
  \|Qu_n\|^2+\|Pu_n\|^2\leq  \|Qu_n\|+\|Pu_n\|+c_1\varepsilon\|u_n\|^2+c_2c(\varepsilon)\|u_n\||u_n|^{p-1}_\mu.
\end{equation*}
By taking \eqref{eq1} into account we get
\begin{equation*}
  \|Qu_n\|^2+\|Pu_n\|^2\leq  \|Qu_n\|+\|Pu_n\|+c_1\varepsilon\|u_n\|^2+c_3c(\varepsilon)\|u_n\|\big(1+\|u_n\|^\frac{{p-1}}{\mu}\big).
\end{equation*}
Hence,
\begin{equation*}
    (1-c_1\varepsilon)\|u_n\|^2\leq \|Qu_n\|+\|Pu_n\|+c_3c(\varepsilon)\|u_n\|\big(1+\|u_n\|^\frac{{p-1}}{\mu}\big).
\end{equation*}
Since by $(W_5)$ we have $\frac{{p-1}}{\mu}<1$, it then suffices to fix $\varepsilon<\frac{1}{2c_1}$ to conclude.
\end{proof}
The following lemma will be helpful for our arguments. It is a special case
of a more general result due to P. L. Lions \cite{PLL}.
\begin{lemme}\label{lions}
Let $(u_n)$ be a bounded sequence in $X$. If there is $r>0$ such that
\begin{equation*}
    \lim_{\substack{n\rightarrow\infty}}\sup_{\substack{a\in\mathbb{R}}}\int_{-a}^{+a}|u_n|^2=0,
\end{equation*}
then $u_n\rightarrow0$ in $L^q(\mathbb{R},\mathbb{R}^{2N})$ for all $q\in(2,\infty)$.
\end{lemme}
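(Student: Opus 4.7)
The statement as printed contains a typographical issue: the constant $r$ appears in the hypothesis clause but is absent from the displayed integral, and $\sup_{a\in\mathbb{R}}\int_{-a}^{+a}|u_n|^2$ would just equal $\|u_n\|_{L^2}^2$. The intended hypothesis is clearly Lions' vanishing condition
\begin{equation*}
\eta_n \;:=\; \sup_{a\in\mathbb{R}} \int_{a-r}^{a+r} |u_n|^2\,dt \;\longrightarrow\; 0,
\end{equation*}
and I would prove the result by the standard covering-plus-interpolation argument from Willem's book, adapted to the fractional Sobolev space $X=H^{1/2}(\mathbb{R},\mathbb{R}^{2N})$.

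Cover $\mathbb{R}$ by the intervals $I_k := [(k-1)r,(k+1)r]$, $k\in\mathbb{Z}$, which have bounded overlap (each point lies in at most two intervals). The hypothesis yields $\|u_n\|_{L^2(I_k)}^2 \le \eta_n$ for every $k$; and because $H^{1/2}$ is translation-invariant, the Sobolev embedding $H^{1/2}(I_k) \hookrightarrow L^R(I_k)$ holds for every $R\in[2,\infty)$ with a constant $C_R$ independent of $k$. These are the only analytic ingredients needed.

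The main step is the range $q\in(2,4)$. Set $R := 4/(4-q)$; a direct check shows $R>q$ and that the pair $(\theta,R)$ with $\theta=(q-2)/q$ satisfies the Hölder interpolation relation $\tfrac{1}{q}=\tfrac{\theta}{2}+\tfrac{1-\theta}{R}$, so that on each $I_k$
\begin{equation*}
\|u_n\|_{L^q(I_k)}^q \;\le\; \|u_n\|_{L^2(I_k)}^{\,q-2}\,\|u_n\|_{L^R(I_k)}^{2} \;\le\; C\,\eta_n^{(q-2)/2}\,\|u_n\|_{H^{1/2}(I_k)}^{2}.
\end{equation*}
Summing over $k\in\mathbb{Z}$ and exploiting the bounded overlap via $\sum_k \|u_n\|_{H^{1/2}(I_k)}^2 \le C'\|u_n\|_{H^{1/2}(\mathbb{R})}^2$, together with the $X$-boundedness of $(u_n)$, gives $\|u_n\|_{L^q(\mathbb{R})}^q \to 0$.

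For $q \ge 4$ I would bootstrap: fix some $q_0\in(2,4)$ and some large $\tilde R>q$. The previous step gives $\|u_n\|_{L^{q_0}}\to 0$, while $\|u_n\|_{L^{\tilde R}}$ remains bounded by the global Sobolev embedding $X\hookrightarrow L^{\tilde R}(\mathbb{R})$; global Hölder interpolation $\|u_n\|_{L^q}\le \|u_n\|_{L^{q_0}}^{\lambda}\,\|u_n\|_{L^{\tilde R}}^{1-\lambda}$ then finishes it. The only delicate point is the exponent bookkeeping: the equation $q(1-\theta)=2$ (which ensures the $H^{1/2}(I_k)^2$ factor sums cleanly) forces $R=4/(4-q)$ and hence $q<4$, so the bootstrap cannot be avoided; everything else is routine.
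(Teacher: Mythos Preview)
The paper does not prove this lemma at all: it is introduced as ``a special case of a more general result due to P.~L.~Lions'' with a citation, and no proof environment follows. Your proposal therefore supplies strictly more than the paper does, and your diagnosis of the typo in the displayed hypothesis is correct.

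Your argument is the standard covering-plus-interpolation proof, adapted to $X=H^{1/2}(\mathbb{R},\mathbb{R}^{2N})$. The one step that warrants a word of care is the claim $\sum_k \|u_n\|_{H^{1/2}(I_k)}^2 \le C'\|u_n\|_{H^{1/2}(\mathbb{R})}^2$: unlike the $H^1$ case in Willem's book, the $H^{1/2}$ norm is nonlocal, so this does not follow immediately from bounded overlap of the $I_k$. It is nonetheless true if you use the intrinsic Gagliardo seminorm on each $I_k$, since $\sum_k \chi_{I_k\times I_k}(x,y)\le 2$ pointwise and the local embedding $H^{1/2}(I_k)\hookrightarrow L^R(I_k)$ (valid for every finite $R$, the critical exponent in one dimension being infinite) holds with a translation-invariant constant. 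With that caveat the proof is correct.
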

\begin{proof}[\textbf{Proof of Theorem \ref{theo1}}]
\par Let $u\in Y_k$. $(W_5)$ implies that
\begin{equation*}
    \Phi(u)\leq\frac{1}{2}\|Qu\|^2-\frac{1}{2}\|Pu\|^2-c|u|^\mu_\mu.
\end{equation*}
Let $\widehat{Y_k}$ be the closure of $Y_k$ in $L^\mu(\mathbb{R},\mathbb{R}^{2N})$, then there is a continuous projection of $\widehat{Y_k}$ on $\oplus_{j=0}^k{\mathbb R} e_j$, and since all norms are equivalent on the latter space we can find a constant $c_1>0$ such that $c_1\|Qu\|^\mu\leq|u|^\mu_\mu$. It follows that
\begin{equation*}
    \Phi(u)\leq \frac{1}{2}\|Qu\|^2-\frac{1}{2}\|Pu\|^2-c_1\|Qu\|^\mu.
\end{equation*}
This implies that $\Phi(u)\rightarrow-\infty$ as $\|u\|\rightarrow\infty$, and condition $(A_1)$ of Theorem \ref{ft} is therefore satisfied for $\rho_k$ large enough.
\par Now let $u\in Z_k$. We deduce from $(W_2)$ and $(W_3)$ that
\begin{equation}\label{eq3}
    \forall\varepsilon>0,\,\, \exists c_\varepsilon>0\,;\, |\nabla W(t,u)|\leq \varepsilon |u|+c_\varepsilon b(t)|u|^{p-1}.
\end{equation}
It then follows that
\begin{equation*}
    \Phi(u)\geq \frac{1}{2}(1-c\varepsilon)\|u\|^2-c_\varepsilon\int_\mathbb{R}\frac{b(t)}{p}|u|^pdt.
\end{equation*}
By choosing $\varepsilon=\frac{1}{2c}$ we obtain
\begin{equation*}
    \Phi(u)\geq \frac{1}{4}\|u\|^2-c_1\int_\mathbb{R}\frac{b(t)}{p}|u|^pdt.
\end{equation*}
Let
\begin{equation}\label{betaka}
    \beta_k:=\sup_{\substack{u\in Z_k\\\|u\|=1}}\Big(\int_\mathbb{R}\frac{b(t)}{p}|u|^pdt\Big)^{\frac{1}{p}}.
\end{equation}
Then
\begin{equation*}
    \Phi(u)\geq \frac{1}{4}\|u\|^2-c_1\beta^p_k\|u\|^p=\frac{1}{2}\Big(\frac{1}{2}\|u\|^2-c_2\beta^p_k\|u\|^p\Big).
\end{equation*}
If we set $r_k:=\big(c_2p\beta_k\big)^\frac{1}{2-p}$, then for every $u\in Z_k$ such that $\|u\|=r_k$ we have
\begin{equation*}
    \Phi(u)\geq\frac{1}{2}\Big(\frac{1}{2}-\frac{1}{p}\Big)\big(c_2p\beta_k\big)^\frac{2}{2-p}.
\end{equation*}
By Lemma \ref{beta} below, $\beta_k\rightarrow0$ as $k\rightarrow\infty$. Hence assumption $(A_2)$ of Theorem \ref{ft} is satisfied.
\par By applying Theorem \ref{ft}, we obtain the existence of a sequence $(u_k^n)_n\subset X$ such that $\Phi(u_k^n)\rightarrow c_k$ and $\Phi'(u_k^n)\rightarrow0 $ as $n\rightarrow\infty$, for every $k$.\\
We claim that there exist a sequence $(a_n)\subset\mathbb{R}$ and real numbers $r,\gamma>0$ such that for $k$ big enough
\begin{equation}\label{eq4}
    \liminf_{\substack{n\rightarrow\infty}}\int_{-a_n}^{+a_n}|u_k^n|^2\geq\gamma.
\end{equation}
In fact, if the claim is not true then, because $(u_k^n)$ is bounded by Lemma \ref{psborne}, we deduce from Lemma \ref{lions} the existence of a subsequence, still denoted $(u_k^n)$, such that $u_k^n\rightarrow0$ in $L^p(\mathbb{R},\mathbb{R}^{2N})$. By using the H\"{o}lder inequality and \eqref{eq2} we have
\begin{eqnarray*}
  \int_\mathbb{R}Pu_k^n\cdot\nabla W(t,u_k^n)dt &\leq& \varepsilon|u_k^n|_2|Pu_k^n|_2+c(\varepsilon)|u_k^n|_p^{p-1}|Pu_k^n|_p \\
   &\leq& C\big(\varepsilon+c(\varepsilon)|u_n|_p^{p-1}\big),
\end{eqnarray*}
where $C$ is a constant which does not depend on $n$ and $\varepsilon$. It follows that
$$\limsup_{\substack{n\rightarrow\infty}}\int_\mathbb{R}Pu_k^n\cdot\nabla W(t,u_k^n)dt\leq c\varepsilon,$$
 and since $\varepsilon$ is arbitrary we deduce that
 $$\int_\mathbb{R}Pu_k^n\cdot\nabla W(t,u_k^n)dt\rightarrow0 \,\,\textnormal{as}\,\ n\rightarrow\infty.$$
By the same way we show that
$$\int_\mathbb{R}Qu_k^n\cdot\nabla W(t,u_k^n)dt\rightarrow0 \,\ \textnormal{and}\,\ \int_\mathbb{R}W(t,u_k^n)dt\rightarrow0\,\,\textnormal{as}\,\ n\rightarrow\infty.$$
It then follows that
\begin{equation*}
    c_k=\lim_{\substack{n\rightarrow\infty}}\big(\Phi(u_k^n)-\frac{1}{2}\big<\Phi'(u_k^n),u_k^n\big>\big)
   =\lim_{\substack{n\rightarrow\infty}}\int_\mathbb{R}\Big(\frac{1}{2}u_k^n\cdot\nabla W(t,u_k^n)-W(t,u_k^n)\Big)dt=0.
\end{equation*}
We obtain a contradiction by taking $k$ sufficiently large, since $c_k\geq b_k\rightarrow\infty$ as $k\rightarrow\infty$.\\
Now by \eqref{eq4} there exists a subsequence, still denoted $(u_k^n)$, such that for $k$ big enough
\begin{equation*}
    \|u_k^n\|_{L^2\big((\alpha_n-r,\alpha_n+r),\mathbb{R}^{2N}\big)}\geq \frac{\gamma}{2}\quad \forall n.
\end{equation*}
By a standard argument, there is $q_n\in\mathbb{Z}$ such that for $k$ big enough
\begin{equation}\label{eq5}
    \|w_k^n\|_{L^2\big((-r-\frac{1}{2},r+\frac{1}{2}),\mathbb{R}^{2N}\big)}\geq \frac{\gamma}{2}\quad \forall n,
\end{equation}
where $w_k^n:=u_k^n(\cdot-q_n)$. Since both $\Phi$ and $\Phi'$ are invariant under translation, it follows that $\Phi(w_k^n)\rightarrow c_k$ and $\Phi'(w_k^n)\rightarrow0 $ as $n\rightarrow\infty$. By Lemma \ref{psborne} again, the sequence $(w_k^n)$ is bounded. Up to a subsequence, we may suppose that
\begin{equation}\label{eq6}
    w_k^n\rightharpoonup w_k\textnormal{ in }X,\quad w_k^n\rightarrow w_k \textnormal{ in }L^2_{loc}(\mathbb{R},\mathbb{R}^{2N}),\quad w_k^n\rightarrow w_k\textnormal{ a.e. }
\end{equation}
By \eqref{eq5}, $w_k\neq0$ for $k$ large enough, and in view of the weak sequentially semicontinuity of $\Phi'$ we have $\Phi'(w_k)=0$. That is, $w_k$ is a critical point of $\Phi$ and therefore a weak solution of \eqref{hamilto}. Again by \eqref{eq5}, we have for $0<R<\infty$ and $k$ large enough
\begin{equation*}
    \sup_{\substack{a\in\mathbb{R}}}\int_{a-R}^{a+R}|w_k^n-w_k|^2\rightarrow0,\,\,n\rightarrow\infty.
\end{equation*}
Lemma \ref{lions} then implies that $w_k^n\rightarrow w_k$ in $L^p(\mathbb{R},\mathbb{R}^{2N})$. Using this and \eqref{eq2}, one can verify easily that
\begin{equation*}
    \int_\mathbb{R}(w_k^n-w_k)\cdot\nabla W(t,w_k^n-w_k)dt\rightarrow0,\quad \int_\mathbb{R}W(t,w_k^n-w_k)dt\rightarrow0,\quad n\rightarrow\infty.
\end{equation*}
By Brezis-Lieb lemma \cite{B-L}, we also have as $n\rightarrow\infty$
\begin{equation*}
    \int_\mathbb{R}w_k^n\cdot\nabla W(t,w_k^n)dt\rightarrow w_k\cdot\nabla W(t,w_k)dt,\quad \int_\mathbb{R}W(t,w_k^n)dt\rightarrow\int_\mathbb{R}W(t,w_k)dt.
\end{equation*}
By taking the limit $n\rightarrow\infty$ in the expression
\begin{equation*}
    \Phi(w^n_k)=\big<\Phi'(w^n_k),w^n_k\big>+\frac{1}{2}\int_\mathbb{R}w_k^n\cdot\nabla W(t,w^n_k)dt-\int_\mathbb{R}W(t,w_k^n)dt,
\end{equation*}
we therefore deduce that $\Phi(w_k)=c_k$. Since $c_k\geq b_k\rightarrow\infty$, $k\rightarrow\infty$, the theorem is proved.
\end{proof}
\begin{lemme}\label{beta}
Assume that $b>0$, $b\in L^{\infty}(\mathbb{R})\cap L^r(\mathbb{R})$, $\frac{1}{r}+\frac{p}{s}=1$ with $2<p,s<\infty$. Then
\begin{equation*}
    \beta_k=\sup_{\substack{u\in Z_k\\\|u\|=1}}\Big(\int_\mathbb{R}\frac{b(t)}{p}|u|^pdt\Big)^{\frac{1}{p}}\rightarrow0,\,\, \textnormal{as}\,\, k\rightarrow\infty.
\end{equation*}
\end{lemme}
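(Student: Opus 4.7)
The plan is to show that $\beta_k$ is a non-increasing sequence bounded below by $0$, so it has a limit $\beta \geq 0$, and then to argue that $\beta = 0$ by analyzing an almost-maximizing sequence. First I would note that $Z_{k+1} \subset Z_k$, whence $\beta_{k+1} \leq \beta_k$, and choose $u_k \in Z_k$ with $\|u_k\| = 1$ such that $\bigl(\int_\mathbb{R} \tfrac{b(t)}{p}|u_k|^p\,dt\bigr)^{1/p} \geq \beta_k - \tfrac{1}{k}$. Since $(u_k)$ is bounded in $X$, there is a subsequence with $u_k \rightharpoonup u$ weakly in $X$.

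The next step is to identify the weak limit as $u = 0$. Since the basis vectors $e_j$ lie in $Z$ and $u_k \in Z_k = \overline{\oplus_{j \geq k}\mathbb{R}e_j}$, we have $\langle u_k, e_j\rangle = 0$ whenever $k > j$. Passing to the weak limit gives $\langle u, e_j\rangle = 0$ for every $j$; moreover $u \in Z$ because each $Z_k \subset Z$ is closed, hence $u = 0$. Thus $u_k \rightharpoonup 0$ in $X$, which combined with the local Sobolev compactness yields $u_k \to 0$ in $L^q_{loc}(\mathbb{R},\mathbb{R}^{2N})$ for every $q \in [2,\infty)$.

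The main work is to promote this to $\int_\mathbb{R} b(t)|u_k|^p\,dt \to 0$, which is the step I expect to be the key technical point because we only have local compactness whereas the integral is global. I would exploit the relation $\tfrac{1}{r}+\tfrac{p}{s}=1$ by writing $r' = s/p$ as the Hölder conjugate of $r$, and splitting the integral over $[-R,R]$ and $\{|t|>R\}$. On the outer region, Hölder's inequality yields
\begin{equation*}
\int_{|t|>R} b(t)|u_k|^p\,dt \leq \Bigl(\int_{|t|>R} b(t)^r\,dt\Bigr)^{1/r} |u_k|_s^{p},
\end{equation*}
and since $b \in L^r(\mathbb{R})$ and $(u_k)$ is bounded in $X \hookrightarrow L^s(\mathbb{R},\mathbb{R}^{2N})$, this term can be made arbitrarily small uniformly in $k$ by choosing $R$ large. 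On the inner region,
\begin{equation*}
\int_{-R}^{R} b(t)|u_k|^p\,dt \leq \|b\|_\infty \int_{-R}^{R} |u_k|^p\,dt \xrightarrow[k\to\infty]{} 0
\end{equation*}
by local compactness. Combining, $\beta_k^p \to 0$, which gives the conclusion. No serious obstacle is expected beyond correctly organizing the two-scale splitting so that $b \in L^\infty \cap L^r$ and the embedding $X \hookrightarrow L^s$ are used on the appropriate pieces.
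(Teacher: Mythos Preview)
Your proposal is correct and follows essentially the same approach as the paper's proof: monotonicity of $\beta_k$, an almost-maximizing sequence $u_k\in Z_k$ with weak limit $0$, and a two-region splitting of the integral where $b\in L^r$ together with the continuous embedding $X\hookrightarrow L^s$ controls the tail via H\"older, while $b\in L^\infty$ and compact local embedding handle the bounded interval. The only cosmetic difference is that the paper fixes the inner interval first by choosing $k_1$ so that $\|b\|_{L^r(\mathbb{R}\setminus I_{k_1})}<\varepsilon$, whereas you phrase it with a generic $R$; the logic is identical.
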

\begin{proof}
Clearly $0\leq\beta_{k+1}\leq\beta_k$, hence $\beta_k\rightarrow\beta\geq0$. For every $k$, there exists $u_k\in Z_k$ such that
\begin{equation*}
    0\leq\beta_k^p-\int_\mathbb{R}\frac{b(t)}{p}|u_k|^pdt<\frac{1}{k}.
\end{equation*}
Up to a subsequence we have $u_k\rightharpoonup u$ in $X$. By the definition of $Z_k$ we have $u=0$. Since $X$ embeds continuously in $L^s(\mathbb{R},\mathbb{R}^{2N})$, the sequence $(u_k)$ is also bounded in $L^s(\mathbb{R},\mathbb{R}^{2N})$. Therefore, there is a constant $C>0$ such that $\||u_k|^p\|_{L^{s/p}(\mathbb{R},\mathbb{R}^{2N})}\leq C$. Let $I_k:=]-k,k[$.
 \begin{equation*}
    b\in L^r(\mathbb{R},\mathbb{R}^{2N})\Rightarrow \|b\|_{L^r(\mathbb{R}\backslash I_k)}\rightarrow0,\quad \textnormal{as }k\rightarrow\infty.
 \end{equation*}
 It follows that for every $\varepsilon>0$, we can find $k_1$ large enough such that $\|b\|_{L^r(\mathbb{R}\backslash I_{k_1})}<\varepsilon$.\\
 Now since the embedding $H^1(I_{k_1},\mathbb{R}^{2N})\hookrightarrow L^p(I_{k_1},\mathbb{R}^{2N})$ is compact, we have $u_k\rightarrow0$ in $L^p(I_{k_1},\mathbb{R}^{2N})$, and since $b(t)\leq\|b\|_\infty$ a.e., we obtain in view of Theorem $A.2$ in \cite{W} that $\int_{I_{k_1}}\frac{b(t)}{p}|u_k|^pdt\rightarrow0$ as $k\rightarrow\infty$. So there is $k_0$ such that
 \begin{equation*}
    \int_{I_k}\frac{b(t)}{p}|u_k|^pdt<\varepsilon,\quad \forall k\geq k_0.
 \end{equation*}
 Since $\frac{1}{p}+\frac{1}{s/p}=1$, we deduce from the H\"{o}lder inequality that
\begin{eqnarray*}
  \int_\mathbb{R}\frac{b(t)}{p}|u_k|^pdt &=& \int_{I_{k_1}}\frac{b(t)}{p}|u_k|^pdt+\int_{\mathbb{R}\backslash I_{k_1}}\frac{b(t)}{p}|u_k|^pdt \\
   &\leq& \int_{I_{k_1}}\frac{b(t)}{p}|u_k|^pdt+\frac{1}{p}\Big(\int_{\mathbb{R}\backslash I_{k_1}}(b(t))^rdt\Big)^r\Big(\int_{\mathbb{R}\backslash I_{k_1}}|u|^sdt\Big)^{p/s}\\
    &=& \int_{I_{k_1}}\frac{b(t)}{p}|u_k|^pdt+\frac{1}{p}\|b\|_{L^r(\mathbb{R}\backslash I_{k_1})}\||u|^p\|_{L^{s/p}(\mathbb{R}\backslash I_{k_1},\mathbb{R}^{2N})}\\
    &<& \varepsilon\big(1+C/p\big)\quad \big(\textnormal{for $k$ big enough}\big).
\end{eqnarray*}
 It follows that
 \begin{equation*}
    \limsup_{\substack{k\rightarrow\infty}}\int_\mathbb{R}\frac{b(t)}{p}|u_k|^pdt\leq\varepsilon\big(1+C/p\big).
 \end{equation*}
 We then conclude by taking the limit $\varepsilon\rightarrow0$.
 \end{proof}

\subsection{The case of a general superquadratic condition}
In this subsection, we assume that $(A_0)$, $(W_1)-(W_4)$ and $(W_6)-(W_{10})$ are satisfied.\\
We define $\Phi_\lambda:X\rightarrow\mathbb{R}$ by
\begin{equation}\label{philamb}
    \Phi_\lambda(u)=\frac{1}{2}\|Qu\|^2-\lambda\Big[\frac{1}{2}\|Pu\|^2+\int_\mathbb{R}W(t,u)dt\Big],\quad \lambda\in[1,2].
\end{equation}
A standard argument shows that:
\begin{lemme}
The conditions $(B_1)$, $(B_2)$ and $(B_3)$ of Theorem \ref{vft} are satisfied, with
\begin{equation*}
    L(u):=\frac{1}{2}\|Qu\|^2,\quad J(u):=\frac{1}{2}\|Pu\|^2+\int_\mathbb{R}W(t,u)dt.
\end{equation*}
Moreover, $\Phi'_\lambda$ is given by
\begin{equation}\label{phiprime}
    \big<\Phi'_\lambda(u),v\big>=\big<Qu,v\big>- \lambda\Big[\big<Pu,v\big>+\int_\mathbb{R}v\cdot\nabla W(t,u)dt\Big].
\end{equation}
\end{lemme}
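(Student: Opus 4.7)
The plan is to verify the three conditions $(B_1)$, $(B_2)$, $(B_3)$ separately, most of which reduce to observations already made in the Ambrosetti--Rabinowitz case. The derivative formula is obtained by straightforward differentiation, exactly as in Proposition \ref{prop}, once one notes that $\Phi_\lambda$ differs from $\Phi$ only by a $\lambda$-dependent rescaling of the negative-definite part and of the nonlinear term; no new analytic work is needed beyond citing Proposition \ref{prop}.

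For $(B_1)$, evenness $\Phi_\lambda(-u)=\Phi_\lambda(u)$ is immediate from $(W_4)$ and from the quadratic character of $L$ and of $\tfrac{1}{2}\|Pu\|^2$. To see that $\Phi_\lambda$ sends bounded sets to bounded sets uniformly in $\lambda\in[1,2]$, I would integrate the pointwise bound obtained from $(W_2)$ and $(W_3)$, namely $|W(t,u)|\le \varepsilon |u|^2+c_\varepsilon b(t)|u|^p$, and use the continuous embeddings $X\hookrightarrow L^2(\mathbb{R},\mathbb{R}^{2N})$ and $X\hookrightarrow L^p(\mathbb{R},\mathbb{R}^{2N})$ (with $b\in L^\infty$) to get $|\Phi_\lambda(u)|\le C(1+\|u\|^2+\|u\|^p)$ independently of $\lambda\in[1,2]$.

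For $(B_2)$, the sign condition $J(u)\ge 0$ follows directly from $W(t,u)>0$ in $(W_8)$ together with $\tfrac{1}{2}\|Pu\|^2\ge 0$. For the coercivity alternative, I would use the orthogonal decomposition $\|u\|^2=\|Pu\|^2+\|Qu\|^2$: if a sequence $(u_n)$ satisfies $\|u_n\|\to\infty$, then either $\|Qu_n\|\to\infty$, in which case $L(u_n)=\tfrac12\|Qu_n\|^2\to\infty$, or $\|Pu_n\|\to\infty$, in which case $J(u_n)\ge \tfrac12\|Pu_n\|^2\to\infty$ (here I again use $W\ge 0$).

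For $(B_3)$, I would simply repeat the proof of the preceding lemma in the paper, with $\Phi$ replaced by $\Phi_\lambda$. For $\tau$-upper semicontinuity, if $u_n\stackrel{\tau}{\to}u$ with $\Phi_\lambda(u_n)\ge C$, then $Qu_n\to Qu$ strongly (so bounded) and, because $W\ge 0$ and $\lambda\ge 1$, the inequality $C\le \tfrac12\|Qu_n\|^2-\tfrac{\lambda}{2}\|Pu_n\|^2-\lambda\!\int W$ forces $(Pu_n)$ to remain bounded; then weak convergence together with Fatou's lemma and weak lower semicontinuity of $\|\cdot\|$ gives $C\le\Phi_\lambda(u)$. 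Weak sequential continuity of $\Phi'_\lambda$ reduces, via the formula I am about to state, to the already-used fact that $\nabla W(t,u_n)\to\nabla W(t,u)$ in $L^{p/(p-1)}_{\mathrm{loc}}$ when $u_n\rightharpoonup u$ in $X$ (invoking Theorem $A.2$ in \cite{W}) and to density of $C_c^\infty$ in $X$. The main mild obstacle is just being careful that the constants appearing in the bound for $(B_1)$ and in the $\tau$-u.s.c.\ argument can be taken independent of $\lambda\in[1,2]$, which follows because $\lambda$ is confined to a compact interval.
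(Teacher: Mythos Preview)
Your proposal is correct. In fact, the paper gives no proof of this lemma at all: it simply writes ``A standard argument shows that:'' before the statement. Your sketch supplies exactly those standard details, and each step (evenness from $(W_4)$, uniform boundedness from the growth estimate \eqref{eq2} and the Sobolev embeddings, $J\ge 0$ from $(W_8)$, the coercivity alternative from the orthogonal splitting $\|u\|^2=\|Pu\|^2+\|Qu\|^2$ together with $W\ge 0$, and $(B_3)$ by the verbatim argument of the preceding lemma) is sound. One cosmetic remark on $(B_2)$: along a given sequence with $\|u_n\|\to\infty$ it is not literally true that one of $\|Qu_n\|$, $\|Pu_n\|$ tends to $\infty$ along the \emph{full} sequence; what is actually needed, and what your decomposition immediately yields, is $L(u)+J(u)\ge\tfrac12\|u\|^2\to\infty$, which is the intended content of the hypothesis.
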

\begin{lemme}
For a.e. $\lambda\in[1,2]$, there exists $u_k(\lambda)\in X$ such that $\Phi_\lambda(u_k(\lambda))=c_k(\lambda)$ and $\Phi'(u_k(\lambda))=0$, for $k$ big enough.
\end{lemme}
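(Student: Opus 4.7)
The plan is to apply Theorem~\ref{vft} to the family $\Phi_\lambda$ and then, for those $\lambda$ for which the theorem produces a bounded Palais--Smale sequence, recover a nonzero critical point of $\Phi_\lambda$ at level $c_k(\lambda)$ by exactly the concentration-compactness and Brezis--Lieb argument already used in the proof of Theorem~\ref{theo1}.

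\emph{Linking geometry.} Hypotheses $(B_1)$--$(B_3)$ are provided by the preceding lemma; note that $J\geq 0$ because $W\geq 0$ by $(W_8)$, and $\Phi_\lambda$ is even in $u$ by $(W_4)$. For $u\in Z_k\subset X^+$ we have $Pu=0$, hence
\[
\Phi_\lambda(u)=\tfrac{1}{2}\|u\|^2-\lambda\int_\mathbb{R} W(t,u)\,dt,
\]
and applying $(W_2)$, $(W_3)$ together with Lemma~\ref{beta} exactly as in Theorem~\ref{theo1} yields an estimate of the form $\Phi_\lambda(u)\geq \tfrac{1}{4}\|u\|^2-C\beta_k^p\|u\|^p$, valid uniformly for $\lambda\in[1,2]$; choosing $r_k$ optimally gives $b_k(\lambda)\to\infty$. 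For the upper bound on $Y_k=X^-\oplus F_k$ with $F_k:=\oplus_{j=0}^k\mathbb{R}e_j$, the nonnegativity of $W$ and $\lambda\geq 1$ imply $\Phi_\lambda(u)\leq \tfrac{1}{2}\|Qu\|^2-\tfrac{1}{2}\|Pu\|^2-\int_\mathbb{R} W(t,u)\,dt$. A standard normalization argument then shows $\Phi_\lambda(u)\to-\infty$ as $\|u\|\to\infty$ on $Y_k$, uniformly in $\lambda$: setting $v_n:=u_n/\|u_n\|$ we may assume $v_n\to v$ weakly in $X$ and a.e.; if $v\neq 0$, then $|u_n(t)|\to\infty$ on a set of positive measure and $(W_7)$ combined with Fatou's lemma forces $\|u_n\|^{-2}\int W(t,u_n)\,dt\to\infty$, while if $v=0$ then the strong convergence $v_n^+\to 0$ in the finite-dimensional $F_k$ gives $\|v_n^-\|\to 1$ and $\Phi_\lambda(u_n)/\|u_n\|^2\to-\lambda/2$. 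In either case $a_k(\lambda)\leq 0\leq b_k(\lambda)$ for $\rho_k$ large enough.

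\emph{Bounded Palais--Smale sequence and critical point.} Theorem~\ref{vft} now furnishes, for a.e.~$\lambda\in[1,2]$, a bounded sequence $(u_k^n(\lambda))_n$ with $\Phi_\lambda(u_k^n)\to c_k(\lambda)\geq b_k(\lambda)$ and $\Phi'_\lambda(u_k^n)\to 0$. Vanishing ($u_k^n\to 0$ in $L^p$) is excluded for $k$ large via the identity
\[
\Phi_\lambda(u)-\tfrac{1}{2}\langle\Phi'_\lambda(u),u\rangle=\lambda\int_\mathbb{R}\Bigl(\tfrac{1}{2}u\cdot\nabla W(t,u)-W(t,u)\Bigr)dt,
\]
whose right-hand side tends to $0$ along any sequence that vanishes in $L^p$ (by $(W_2), (W_3)$ and Hölder), contradicting $c_k(\lambda)\to\infty$. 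Lemma~\ref{lions} and the translation invariance of $\Phi_\lambda$ then produce integers $q_n\in\mathbb{Z}$ such that $w_k^n(\lambda):=u_k^n(\lambda)(\cdot-q_n)\rightharpoonup w_k(\lambda)\neq 0$; by the weak sequential continuity of $\Phi'_\lambda$, $w_k(\lambda)$ is a critical point. Finally, the same argument as in Theorem~\ref{theo1} (local $L^2$ compactness plus Lemma~\ref{lions} applied to the differences) yields $w_k^n\to w_k$ in $L^p$, and Brezis--Lieb lets us pass to the limit in
\[
\Phi_\lambda(w_k^n)=\langle\Phi'_\lambda(w_k^n),w_k^n\rangle+\lambda\int_\mathbb{R}\Bigl(\tfrac{1}{2}w_k^n\cdot\nabla W(t,w_k^n)-W(t,w_k^n)\Bigr)dt
\]
to conclude $\Phi_\lambda(w_k(\lambda))=c_k(\lambda)$.

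The main obstacle is the $Y_k$-side of the linking geometry: replacing the Ambrosetti--Rabinowitz condition by the weaker $(W_7)$ forces the normalization/Fatou argument sketched above in place of the direct estimate $\int W\geq \delta|u|_\mu^\mu$ available in Theorem~\ref{theo1}. The remaining steps, including the ruling out of vanishing and the propagation of the critical level through translations and Brezis--Lieb, proceed along exactly the same lines as in Theorem~\ref{theo1}.
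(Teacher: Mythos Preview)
Your proposal is correct and follows essentially the same route as the paper: verify the linking geometry for $\Phi_\lambda$, apply Theorem~\ref{vft}, and then reproduce the concentration-compactness/Brezis--Lieb argument from the proof of Theorem~\ref{theo1} to upgrade the bounded Palais--Smale sequence to a nonzero critical point at level $c_k(\lambda)$. The only (inessential) difference is on the $Y_k$-side: the paper extracts from $(W_3)$ and $(W_7)$ a pointwise lower bound $W(t,u)\geq C_\delta|u|^\gamma-\delta|u|^2$ and recycles the finite-dimensional projection estimate of Theorem~\ref{theo1}, whereas you obtain $a_k(\lambda)\leq 0$ via the normalization/Fatou contradiction argument that the paper itself employs later in Lemma~\ref{bounded} and in the verification of $(H_2)$.
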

\begin{proof}
The assumptions $(W_3)$ and $(W_7)$ imply that for every $\delta>0$, there is $C_\delta>0$ such that $W(t,u)\geq C_\delta|u|^\mu-\delta|u|^2$. It follows as in the proof of Theorem \ref{theo1} that for every $u\in Y_k$,  $\Phi_\lambda(u)\rightarrow-\infty$ as $\|u\|\rightarrow\infty$, uniformly in $\lambda\in[1,2]$. Therefore, we can choose $\rho_k$ sufficiently large such that $a_k(\lambda)\leq0$.
\par Let $u\in Z_k$. As in the proof of Theorem \ref{theo1}, we can show that for any $\lambda\in[1,2]$,
 \begin{equation*}
    \Phi_\lambda(u)\geq \frac{1}{2}\Big(\frac{1}{2}-C\beta_k^p\|u\|^p\Big),
 \end{equation*}
 where $C>0$ is constant and $\beta_k$ is defined by \eqref{betaka}. If $u$ is chosen such that $\|u\|=r_k:=\big(Cp\beta_k^p\big)^\frac{1}{2-p}$, then we have
 \begin{equation}\label{betild}
    \Phi_\lambda(u)\,\geq\, \widetilde{b_k}:=\frac{1}{2}\Big(\frac{1}{2}-\frac{1}{p}\Big)\big(Cp\beta_k^p\big)^\frac{2}{2-p}.
 \end{equation}
  Since by Lemma \ref{beta} $\beta_k\rightarrow0,$ we have $\widetilde{b_k}\rightarrow\infty$ and hence $b_k(\lambda)\rightarrow\infty$ uniformly in $\lambda$, as $k\rightarrow\infty$.\\
 By applying Theorem \ref{vft}, we then conclude, for $k$ large enough, that $c_k(\lambda)\geq b_k(\lambda)$ and for a.e. $\lambda\in[1,2]$, there exists a sequence $(v_k^n(\lambda))$ in $X$ such that
 \begin{equation*}
    \sup_{\substack{n}}\|v_k^n\|<\infty,\quad \Phi_\lambda(v_k^n)\rightarrow c_k(\lambda),\quad \Phi_\lambda(v_k^n)\rightarrow0,\,\,\textnormal{as }n\rightarrow\infty.
 \end{equation*}
We proceed as in the proof of Theorem \ref{theo1} to obtain the existence of $(u_k^n(\lambda))$ which satisfies the conclusion of the Lemma.
\end{proof}
As a consequence of the above lemma we have:
\begin{cor}\label{cor}
There exist $(\lambda_n)\subset[1,2]$ and $(z_k^n)_n\subset X\backslash\{0\}$ such that
\begin{equation*}
    \lambda_n\rightarrow1,\quad \Phi'_\lambda(z^n_k)=0,\quad \Phi_\lambda(z^n_k)=c_k(\lambda_n).
\end{equation*}
\end{cor}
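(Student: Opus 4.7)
The plan is to extract the sequence directly from the almost-everywhere conclusion of the preceding lemma. That lemma produces a full-measure set $\Lambda \subset [1,2]$ such that for every $\lambda \in \Lambda$ and every sufficiently large $k$, the functional $\Phi_\lambda$ admits a critical point $u_k(\lambda)$ with $\Phi_\lambda(u_k(\lambda)) = c_k(\lambda)$. Because $\Lambda$ has full measure, the intersection $\Lambda \cap (1, 1+\tfrac{1}{n})$ is non-empty for every $n \geq 1$. I would then pick any $\lambda_n \in \Lambda \cap (1, 1+\tfrac{1}{n})$ and set $z_k^n := u_k(\lambda_n)$. By construction $\lambda_n \to 1$, $\Phi'_{\lambda_n}(z_k^n) = 0$, and $\Phi_{\lambda_n}(z_k^n) = c_k(\lambda_n)$, which is exactly the three identities claimed.

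The only thing left to argue is that $z_k^n \neq 0$. Since $\Phi_{\lambda_n}(0) = 0$, it suffices to know that $c_k(\lambda_n) > 0$. But this is already embedded in the proof of the previous lemma: for $k$ sufficiently large one has
\begin{equation*}
    c_k(\lambda_n) \;\geq\; b_k(\lambda_n) \;\geq\; \widetilde{b_k} \;=\; \tfrac{1}{2}\bigl(\tfrac{1}{2}-\tfrac{1}{p}\bigr)\bigl(Cp\beta_k^p\bigr)^{\frac{2}{2-p}} \;>\; 0,
\end{equation*}
uniformly in $\lambda \in [1,2]$, thanks to Lemma \ref{beta}. Hence $\Phi_{\lambda_n}(z_k^n) > 0$, forcing $z_k^n \neq 0$.

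There is essentially no obstacle here; the statement is a packaging corollary. The only point demanding a line of care is the non-triviality of $z_k^n$, and even that is immediate from the positivity of the linking level $\widetilde{b_k}$ established in \eqref{betild}.
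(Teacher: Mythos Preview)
Your proposal is correct and is precisely the argument the paper has in mind: the paper gives no explicit proof for this corollary, introducing it simply with ``As a consequence of the above lemma we have,'' and your extraction of a sequence $\lambda_n\to 1$ from the full-measure set together with the positivity $c_k(\lambda_n)\geq\widetilde{b_k}>0$ to guarantee $z_k^n\neq 0$ is exactly the intended (and only reasonable) reading.
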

We need the following lemma.
\begin{lemme}\label{lem}
Let $\lambda\in[1,2]$. If $z_\lambda\neq0$ and $\Phi'_\lambda(z_\lambda)=0$, then $\Phi_\lambda(z_\lambda+w)<\Phi_\lambda(z_\lambda)$ for every $w\in\mathcal{Z}_\lambda:=\{rz_\lambda+v\,; \, r\geq-1,v\in Y\}$.
\end{lemme}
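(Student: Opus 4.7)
The plan is to imitate the Szulkin--Weth generalized Nehari manifold argument and show that $z_\lambda$ is the unique strict maximum of $\Phi_\lambda$ on $z_\lambda+\mathcal{Z}_\lambda=\{sz_\lambda+v:s\ge 0,\,v\in Y\}$. First, reparameterize $w=rz_\lambda+v\in\mathcal{Z}_\lambda$ by setting $s:=1+r\ge 0$, so that $z_\lambda+w=sz_\lambda+v$. Using \eqref{philamb}, $Qv=0$ for $v\in Y$, and $\|sPz_\lambda+v\|^2=s^2\|Pz_\lambda\|^2+2s\langle Pz_\lambda,v\rangle+\|v\|^2$, the difference $\Phi_\lambda(z_\lambda)-\Phi_\lambda(sz_\lambda+v)$ expands into indefinite quadratic terms and a nonlinear integral. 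Substituting the two critical-point identities obtained by testing $\Phi'_\lambda(z_\lambda)=0$ against $z_\lambda$ and against $v\in Y$,
\[
\|Qz_\lambda\|^2=\lambda\|Pz_\lambda\|^2+\lambda\int_{\mathbb{R}}\nabla W(t,z_\lambda)\cdot z_\lambda\,dt,\qquad \langle Pz_\lambda,v\rangle=-\int_{\mathbb{R}}\nabla W(t,z_\lambda)\cdot v\,dt,
\]
the quadratic pieces collapse, yielding
\[
\Phi_\lambda(z_\lambda)-\Phi_\lambda(z_\lambda+w)=\frac{\lambda}{2}\|v\|^2+\lambda\int_{\mathbb{R}}I(t)\,dt,
\]
with $I(t):=W(t,sz_\lambda(t)+v(t))-W(t,z_\lambda(t))+\tfrac{1-s^2}{2}z_\lambda(t)\cdot\nabla W(t,z_\lambda(t))-sv(t)\cdot\nabla W(t,z_\lambda(t))$.

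The heart of the argument is a pointwise nonnegativity claim: for each $t\in\mathbb{R}$, $u\in\mathbb{R}^{2N}$, $s\ge 0$, and $y\in\mathbb{R}^{2N}$,
\[
W(t,su+y)-W(t,u)+\tfrac{1-s^2}{2}u\cdot\nabla W(t,u)-sy\cdot\nabla W(t,u)\ge 0,
\]
with equality only at $(s,y)=(1,0)$ when $u\ne 0$, and only at $y=0$ when $u=0$. By $(W_9)$, $W(t,u)=\widetilde W(t,|u|)$ and $\nabla W(t,u)=\widetilde W_r(t,|u|)u/|u|$. Decomposing $y=\beta u+y^\perp$ with $y^\perp\perp u$, and setting $a=|u|$, $\sigma=s+\beta$, $b=\sqrt{\sigma^2 a^2+|y^\perp|^2}$, the pointwise expression becomes $\widetilde W(t,b)-\widetilde W(t,a)+\tfrac{1}{2}a\widetilde W_r(t,a)(1-\sigma^2+\beta^2)$. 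Condition $(W_8)$ makes $\widetilde W(t,\cdot)$ strictly increasing, so turning on $|y^\perp|>0$ only improves the inequality, and the correction $\tfrac{1}{2}a\widetilde W_r(t,a)\beta^2$ is nonnegative; matters reduce to the one-dimensional claim $\phi(\sigma)\ge\phi(1)+\tfrac{\sigma^2-1}{2}\phi'(1)$ for $\phi(s):=\widetilde W(t,sa)$ on $\sigma\ge 0$, strict unless $\sigma=1$.

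For this one-variable inequality the decisive fact is that $s\mapsto\phi'(s)/s=a^2\,g(t,sa)$ is strictly increasing on $(0,\infty)$, where $g(t,r):=\widetilde W_r(t,r)/r$. Injectivity of $g(t,\cdot)$ follows from $(W_9)$--$(W_{10})$, so by continuity $g(t,\cdot)$ is strictly monotone; the direction is forced to be increasing because otherwise $\widetilde W$ could not satisfy the superquadraticity encoded in $(W_7)$--$(W_8)$. Then $\phi(\sigma)-\phi(1)-\tfrac{\sigma^2-1}{2}\phi'(1)=\int_1^\sigma\tau\bigl(\phi'(\tau)/\tau-\phi'(1)\bigr)d\tau>0$ for $\sigma>0$ with $\sigma\ne 1$, and the endpoint $\sigma=0$ is handled directly by $(W_8)$ since $\tfrac{1}{2}\phi'(1)=\tfrac{1}{2}u\cdot\nabla W(t,u)>W(t,u)=\phi(1)$. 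If $u=0$ the integrand collapses to $W(t,y)\ge 0$ (zero only for $y=0$, by $(W_8)$) using $\nabla W(t,0)=0$ from $(W_3)$.

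Combining $I\ge 0$ with $\tfrac{\lambda}{2}\|v\|^2\ge 0$ gives $\Phi_\lambda(z_\lambda+w)\le\Phi_\lambda(z_\lambda)$. Strictness for $w\ne 0$ comes from either $\|v\|^2>0$ (if $v\not\equiv 0$) or, when $v\equiv 0$ and $s\ne 1$, from $I(t)>0$ on the positive-measure set $\{z_\lambda\ne 0\}$. The principal obstacle is the pointwise inequality, specifically establishing strict monotonicity and the increasing direction of $g(t,\cdot)$ from the combined hypotheses $(W_7)$--$(W_{10})$, and the sign bookkeeping in the radial decomposition when $\sigma$ may be negative.
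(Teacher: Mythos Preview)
Your argument is correct and arrives at the same reduced identity as the paper,
\[
\Phi_\lambda(z_\lambda)-\Phi_\lambda(z_\lambda+w)=\frac{\lambda}{2}\|v\|^2+\lambda\int_{\mathbb R}I(t)\,dt,
\]
but you prove the pointwise inequality $I(t)\ge 0$ by a genuinely different route. The paper fixes $t$ and $v$, regards the integrand as a function $f(s)$ on $[-1,\infty)$, and analyzes its maximum: at a critical point $f'(s)=0$ forces $y_\lambda\cdot\nabla W(t,z_\lambda)=z_\lambda\cdot\nabla W(t,y_\lambda)$, and then a case split on the sign of $z_\lambda\cdot y_\lambda$ together with $(W_6)$, $(W_8)$, $(W_9)$, $(W_{10})$ yields $f(s)<0$. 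You instead exploit the radial structure forced by $(W_9)$: writing $W(t,u)=\widetilde W(t,|u|)$ and decomposing $y=\beta u+y^\perp$, you reduce everything to the scalar inequality $\phi(\sigma)\ge\phi(1)+\tfrac{\sigma^2-1}{2}\phi'(1)$ for $\phi(\sigma)=\widetilde W(t,\sigma a)$, which follows once $r\mapsto\widetilde W_r(t,r)/r$ is strictly increasing. Your derivation of that monotonicity---injectivity from $(W_{10})$, hence strict monotonicity by continuity, with the increasing direction pinned down by $(W_7)$---is clean, and as a bonus shows that $(W_6)$ is redundant here (it is automatic once $\widetilde W_r>0$, which $(W_8)$ gives). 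Your flagged ``obstacle'' about $\sigma<0$ is actually harmless: since the reduced expression depends only on $|\sigma|$ through $b=\sqrt{\sigma^2a^2+|y^\perp|^2}$ and $\sigma^2$, the scalar inequality for $|\sigma|\ge 0$ covers every case. The paper's approach has the virtue of staying in $\mathbb R^{2N}$ and using the hypotheses in their stated vector form; yours is more transparent and reveals exactly which one\nobreakdash-dimensional fact is doing the work.
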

\begin{proof}
Let $w=rz_\lambda+v\in Z_\lambda$. It is easy to verify that
\begin{multline}\label{eq7}
    \Phi_\lambda(z_\lambda+w)-\Phi_\lambda(z_\lambda)=-\frac{\lambda}{2}\|v\|^2+r(\frac{r}{2}+1)\|Qz_\lambda\|^2-\lambda r(\frac{r}{2}+1)\|Pz_\lambda\|^2\\
    -\lambda\Big[(1+r)\big<Pz_\lambda,v\big>+\int_\mathbb{R}W\big(t,(1+r)z_\lambda+v\big)dt-\int_\mathbb{R}W(t,z_\lambda)\Big].
\end{multline}
Now, $\Phi'_\lambda(z_\lambda)=0$ implies $\big<\Phi'_\lambda(z_\lambda),r(\frac{r}{2}+1)z_\lambda+(1+r)v\big>=0$, which gives
\begin{multline*}
    r(\frac{r}{2}+1)\|Qz_\lambda\|^2-\lambda\Big[r(\frac{r}{2}+1)\|Pz_\lambda\|^2+(1+r)\big<Pz_\lambda,v\big>\Big]=\\ \lambda\int_\mathbb{R}\big(r(\frac{r}{2}+1)z_\lambda+(1+r)v\big)\cdot \nabla W(t,z_\lambda).
\end{multline*}
Reporting this in \eqref{eq7} we obtain
\begin{multline}\label{eq}
     \Phi_\lambda(z_\lambda+w)-\Phi_\lambda(z_\lambda) = -\frac{\lambda}{2}\|v\|^2+\\
   \lambda\int_{\mathbb{R}}\big[\big(r(\frac{r}{2}+1)z_\lambda+
     (1+r)v\big)\cdot\nabla W(t,z_\lambda)+W(t,z_\lambda)-W(t,z_\lambda+w) \big]dt.
\end{multline}
We define $f:[-1,\infty[\rightarrow\mathbb{R}$ by
              \begin{equation*}
                f(s):=\big(s(\frac{s}{2}+1)z_\lambda+(1+s)v\big)\cdot\nabla W(t,z_\lambda)+W(t,z_\lambda)-W(t,z_\lambda+w).
              \end{equation*}
              Since $z_\lambda\neq0$, then in view of $(W_8)$ we have $f(-1)<0$. On the other hand, we deduce from $(W_7)$ and $(W_8)$ that $f(s)\rightarrow-\infty$ as $s\rightarrow\infty$. Therefore, $f$ attains its maximum at a point $s\in[-1,\infty[$ which satisfies
              \begin{equation}\label{fprim}
                f'(s)=\big((1+s)z_\lambda+v\big)\cdot\nabla W(t,z_\lambda)-z_\lambda\cdot\nabla W(t,(1+s)z_\lambda+v)=0.
              \end{equation}
              Setting $y_\lambda=z_\lambda+w=(1+s)z_\lambda+v$, one can easily verify that
              \begin{equation*}
                f(s)=-\big(\frac{s^2}{2}+s+1\big)z_\lambda\cdot\nabla W(t,z_\lambda)+(1+s)y_\lambda\cdot\nabla W(t,z_\lambda)+W(t,z_\lambda)-W(t,y_\lambda).
              \end{equation*}
              It is then clear that if $z_\lambda\cdot y_\lambda\leq0$, then $(W_6)$ and $(W_8)$ implies $f(s)<0$. Suppose that $z_\lambda\cdot y_\lambda>0$, then in view of \eqref{fprim}, $(W_{10})$ implies $|z_\lambda|=|y_\lambda|$ and  by $(W_9)$ we have $W(t,z_\lambda)=W(t,y_\lambda)$ and $y_\lambda\cdot\nabla W(t,z_\lambda)<z_\lambda\cdot\nabla W(t,z_\lambda)$, whenever $w\neq0$. This implies that $f(s)<-\frac{s^2}{2}z_\lambda\cdot\nabla W(t,z_\lambda)\leq0.$ Hence
              $f(r)<0$ for every $r\geq-1$.\\
              It then follows from \eqref{eq} that $\Phi_\lambda(z_\lambda+w)<\Phi_\lambda(z_\lambda)$.
\end{proof}
\begin{lemme}\label{bounded}
The sequence $(z_k^n)_n$ obtained in Corollary \ref{cor} above is bounded.
\end{lemme}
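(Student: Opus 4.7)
The plan is to argue by contradiction: I suppose $\|z_k^n\| \to \infty$ and eliminate both branches of the Lions dichotomy for the normalized sequence $v_n := z_k^n/\|z_k^n\|$.

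First I would record two uniform facts about $c_k(\lambda_n)$. Since $J(u) = \tfrac12\|Pu\|^2 + \int W \geq 0$ by $(B_2)$ and $(W_8)$, $\lambda \mapsto c_k(\lambda)$ is non-increasing on $[1,2]$, so $c_k(\lambda_n) \leq c_k(1) =: M$. On the other hand $c_k(\lambda_n) \geq \widetilde{b_k} > 0$ by \eqref{betild}; combined with $W \geq 0$ and the explicit formula for $\Phi_{\lambda_n}(z_k^n)$, this forces $Qz_k^n \neq 0$ for every $n$ (otherwise $\Phi_{\lambda_n}(z_k^n) \leq 0$).

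Non-vanishing case. By $\mathbb{Z}$-invariance of $\Phi_{\lambda_n}$, after integer translations $q_n$ I may assume $\tilde v_n := v_n(\cdot-q_n) \rightharpoonup \tilde v \neq 0$ almost everywhere, and $\tilde z_k^n := z_k^n(\cdot-q_n)$ is still a critical point at level $c_k(\lambda_n)$. On $\{\tilde v \neq 0\}$ one has $|\tilde z_k^n(t)| = \|z_k^n\|\,|\tilde v_n(t)| \to \infty$, so $(W_7)$ and $\gamma > 2$ give $W(t,\tilde z_k^n)/\|z_k^n\|^2 \to +\infty$ pointwise. Fatou then makes $\int_{\mathbb{R}} W(t,\tilde z_k^n)/\|z_k^n\|^2\,dt \to +\infty$, contradicting $\Phi_{\lambda_n}(\tilde z_k^n)/\|z_k^n\|^2 \to 0$ with bounded quadratic part.

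Vanishing case. Lemma \ref{lions} gives $v_n \to 0$ in $L^q$ for every $q \in (2,\infty)$, and $(W_2)$--$(W_3)$ then yield $\int W(t,sv_n)\,dt \to 0$ for each fixed $s > 0$. Set $a := \lim\|Qv_n\|^2$. On one hand, $W \geq 0$ in the expansion of $\Phi_{\lambda_n}(z_k^n)/\|z_k^n\|^2 \to 0$ forces $a \geq 1/2$. On the other hand, Lemma \ref{lem} applied to $sv_n = z_k^n + w$ with $w = (s/\|z_k^n\| - 1)z_k^n \in \mathcal{Z}_{\lambda_n}$ gives $\Phi_{\lambda_n}(sv_n) \leq c_k(\lambda_n)$; passing to the limit yields $\tfrac{s^2}{2}(2a-1) \leq M$ for all $s > 0$, hence $a \leq 1/2$. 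Thus $a = 1/2$, $\|Qz_k^n\|$ is comparable to $\|z_k^n\|$, and $\hat w_n := Qz_k^n/\|Qz_k^n\| \in X^+$ is a well-defined unit vector. Writing $t\hat w_n = (t/\|Qz_k^n\|) z_k^n - (t/\|Qz_k^n\|) Pz_k^n$ as an element of $z_k^n + \mathcal{Z}_{\lambda_n}$, Lemma \ref{lem} delivers
\[
\tfrac{t^2}{2} - \lambda_n \int_{\mathbb{R}} W(t, t\hat w_n)\,dt \leq M, \qquad \forall\, t \geq 0,
\]
so for $t_0 > \sqrt{2M+2}$ we get $\int W(t,t_0\hat w_n)\,dt \geq 1$. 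I would then show $\hat w_n$ also vanishes in Lions' sense: any failure would produce integer translates $p_n$ with $\hat w_n(\cdot - p_n) \rightharpoonup \hat w \neq 0$ in $X$, but the $v_n$-vanishing is preserved under integer translation, so $v_n(\cdot-p_n) \rightharpoonup 0$ in $X$, hence $Qv_n(\cdot-p_n) \rightharpoonup 0$ by continuity of the projection and its commutation with integer translations (periodicity of $A$), and dividing by $\|Qv_n\| \to 1/\sqrt{2}$ gives $\hat w_n(\cdot-p_n) \rightharpoonup 0$---a contradiction. Thus $\hat w_n \to 0$ in $L^q$ for $q \in (2,\infty)$, and $(W_2)$--$(W_3)$ force $\int W(t,t_0\hat w_n)\,dt \to 0$, contradicting the lower bound.

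The main obstacle is precisely the vanishing case of $v_n$: the pointwise $(W_7)$--Fatou mechanism from the non-vanishing case is no longer available, and one must instead combine the variational inequality of Lemma \ref{lem} (first to pin down the ratio $a = 1/2$ and then to extract the concentrated direction $\hat w_n$) with a weak-convergence argument that transfers the $L^q$-vanishing of $v_n$ to $\hat w_n$ without appealing to $L^q$-boundedness of the spectral projection.
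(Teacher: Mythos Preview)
Your argument is correct, and the non-vanishing branch is handled exactly as in the paper. The real difference lies in the vanishing branch, where your route is noticeably longer than the paper's.

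The paper applies the Lions dichotomy not to the full normalized sequence $v_n=z_k^n/\|z_k^n\|$ but directly to its positive part $Qv_n$. If $Qv_n$ vanishes, then $Qv_n\to 0$ in $L^q$ for $q\in(2,\infty)$, so $\int_{\mathbb R}W(t,RQv_n)\,dt\to 0$ for every fixed $R>0$; since $\|Qv_n\|^2\ge\tfrac12$ (from $\Phi_{\lambda_n}(z_k^n)\ge 0$ and $W\ge 0$), Lemma~\ref{lem} applied to $RQv_n$ gives
\[
c_k(\lambda_n)\;\ge\;\Phi_{\lambda_n}(RQv_n)\;\ge\;\tfrac{R^2}{4}-\lambda_n\!\int_{\mathbb R}W(t,RQv_n)\,dt\;\longrightarrow\;\tfrac{R^2}{4},
\]
which contradicts the uniform bound $c_k(\lambda_n)\le\widetilde{c_k}:=\sup_{B_k}\Phi$ for $R$ large. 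That is the whole vanishing case in the paper.

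By choosing the dichotomy for the full $v_n$ instead, you lose direct control of $\hat w_n:=Qv_n/\|Qv_n\|$ in $L^q$, which forces your detour: first the sandwich $a=\tfrac12$ (of which only $a\ge\tfrac12$ is actually used later, to keep $\|Qv_n\|$ away from $0$), then the weak-convergence transfer ``$v_n$ vanishes $\Rightarrow$ any translated subsequence of $v_n$ converges weakly to $0$ $\Rightarrow$ so does $Qv_n$ by continuity and $\mathbb Z$-equivariance of $Q$ $\Rightarrow$ $\hat w_n$ cannot have a nontrivial weak limit after translation''. This is a legitimate and instructive argument, and it makes explicit why periodicity of $A$ matters (commutation of the spectral projection with integer shifts), but it is strictly more work than the paper's one-line use of Lemma~\ref{lem} on $RQv_n$. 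Applying the dichotomy to $Qv_n$ from the start would let you delete the $a\le\tfrac12$ step and the entire transfer paragraph.
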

\begin{proof}
We assume by contradiction that $(z_k^n)$ is unbounded. Then, up to a subsequence, we may suppose that $\|z_k^n\|\rightarrow\infty$ as $n\rightarrow\infty$.
Let $w_k^n=z_k^n/\|z_k^n\|$, then since $(Qw_k^n)$ is bounded we have either
\begin{enumerate}
  \item [(i)] $(Qw_k^n)_n$ is vanishing, i.e.
  \begin{equation*}
    \lim_{\substack{n\rightarrow\infty}}\sup_{\substack{a\in\mathbb{R}}}\int_{a-1}^{a+1}|Qw_k^n|^2dt=0,
  \end{equation*}
  or
  \item [(ii)] $(Qw_k^n)_n$ is nonvanishing, i.e. there are numbers $r,\delta>0$ and a sequence $(a_n)\subset\mathbb{R}$ such that
  \begin{equation*}
    \liminf_{\substack{n\rightarrow\infty}}\int_{a_n-r}^{a_n+r}|Qw_k^n|^2dt\geq\delta.
  \end{equation*}
\end{enumerate}
Following an approach by Jeanjean \cite{LJJ}, we will find a contradiction by showing that neither $(i)$ nor $(ii)$ does not actually hold.
\par Assume that $(Qw_k^n)$ is vanishing. Then by Lemma \ref{lions} we have $Qw_k^n\rightarrow0$ as $n\rightarrow\infty$ in $L^p(\mathbb{R},\mathbb{R}^{2N})$. We then deduce from \eqref{eq2} that for every $R\geq0$, $\int_{\mathbb{R}}W(t,RQw_k^n)dt\rightarrow0$ as $n\rightarrow\infty.$ Since $\Phi_\lambda(z^n_k)\geq0$, we have $\|Qw_k^n\|\geq\|Pw_k^n\| $ and then $\|Qw_k^n\|^2\geq\frac{1}{2}$. By using Lemma $\ref{lem}$, we have
\begin{equation*}
    c_k(\lambda_n)=\Phi_{\lambda_n}(z_k^n)\geq \Phi_{\lambda_n}(RQw_k^n)\geq\frac{R^2}{4}-\lambda_n\int_{\mathbb{R}}W(t,RQw_k^n)dt.
\end{equation*}
Thus by setting $\widetilde{c_k}:=\sup_{u\in B_k}\Phi(u)$, we deduce that
\begin{equation*}
    \widetilde{c_k}\geq\frac{R^2}{4}-\lambda_n\int_{\mathbb{R}}W(t,RQw_k^n)dt\,\rightarrow R^2/4\quad\textnormal{as } n\rightarrow\infty.
\end{equation*}
 We obtain a contradiction by taking $R$ big enough.
 \par Assume now that $(Qw_k^n)$ is not vanishing. Then, up to a translation and a subsequence, we have
\begin{equation}\label{eq8}
    \liminf_{\substack{n\rightarrow\infty}}\int_{-r-\frac{1}{2}}^{r+\frac{1}{2}}|Qw_k^n|^2dt\geq\frac{\delta}{2}.
\end{equation}
Setting $w_k^n\rightharpoonup w_k$ as $n\rightarrow\infty,$ then  $(\ref{eq8})$ implies, since $Qw_k^n\rightarrow Qw_k$ in $L_{loc}^2(\mathbb{R},\mathbb{R}^{2N})$, that $Qw_k\neq0$. And this implies that $|z_k^n|\rightarrow\infty$ as $n\rightarrow\infty$. It then follows from $(W_7)$ and Fatou's lemma that
\begin{equation*}
    \liminf_{n\rightarrow\infty}\int_{\mathbb{R}}\frac{W(t,z_k^n)}{\|z_k^n\|^2}dt=\infty.
\end{equation*}
Hence,
\begin{equation*}
    0\leq\frac{\Phi_{\lambda_n}(z_k^n)}{\|z_k^n\|^2}=\frac{1}{2}\big(\|Qw_k^n\|^2-\lambda_n\|Pw_k^n\|^2\big)-
    \lambda_n\int_{\mathbb{R}}\frac{W(t,z_k^n)}{\|z_k^n\|^2}dt\rightarrow-\infty\quad \textnormal{as }n\rightarrow\infty.
\end{equation*}
A contradiction again.\\
Consequently, the sequence $(z_k^n)_n$ is bounded.
\end{proof}
We can now prove Theorem \ref{theo2}.
\begin{proof}[\textbf{Proof of Theorem \ref{theo2}}]
Consider the sequence $(z_k^n)$ above. One can easily verify that
\begin{equation*}
    \Phi(z_k^n)=\Phi_{\lambda_n}(z_k^n)+\frac{1}{2}(\lambda_n-1)\|Pz_k^n\|^2+(\lambda_n-1)\int_{\mathbb{R}}W(t,z_k^n)dt,
\end{equation*}
and
\begin{equation*}
    \big<\Phi'(z_k^n)-\Phi'_{\lambda_n}(z_k^n),v \big> =(\lambda_n-1)\Big[(Pz_k^n,v)+\int_{\mathbb{R}}v\cdot W(t,z_k^n)dt\Big].
\end{equation*}
Note that the sequence $(c_k(\lambda_n))_n$ is nondecreasing and bounded from above. Then, there is $c_k\geq c_k(1)\geq \widetilde{b_k}$ such that $c_k(\lambda_n)\rightarrow c_k$ as $n\rightarrow\infty$ \big(where $\widetilde{b_k}$ is defined in \eqref{betild}\big). It follows from the above relations that $(z_k^n)$ is a $(PS)_{c_k}$ sequence for $\Phi$. By repeating the argument in the proof of Theorem \ref{theo1}, we obtain the existence of $z_k\in X$ such that $\Phi'(z_k)=0$ and $\Phi(z_k)\geq \widetilde{b_k}$. Since $\widetilde{b_k}\rightarrow\infty$ as $k\rightarrow\infty$, the proof of Theorem \ref{theo2} is completed.
\end{proof}
\section{Existence of a ground state homoclinic solution}\label{section3}
\subsection{Generalized Nehari manifold}
Let $X$ be a Hilbert space with norm $\|\cdot\|$, and an orthogonal decomposition $X=X^+\oplus X^-$. We denote by $S^+$ the unit sphere in $X^+$; that is,
\begin{equation*}
    S^+:=\big\{u\in X^+\,\big|\,\|u\|=1\big\}.
\end{equation*}
For $u=u^+ + u^-\in X$, where $u^\pm\in X^\pm$, we define
\begin{equation}\label{}
    X(u):=\mathbb{R}u\oplus X^-\equiv\mathbb{R}u^+\oplus X^- \,\,\, \textnormal{and}\,\,\, \widehat{X}(u):=\mathbb{R}^+u\oplus X^-\equiv\mathbb{R}^+u^+\oplus X^-.
\end{equation}
Let $\Phi$ be a $\mathcal{C}^1-$functional defined on $X$ by
\begin{equation*}
    \Phi(u):=\frac{1}{2}\|u^+\|^2-\frac{1}{2}\|u^-\|^2-P(u).
\end{equation*}

We consider the following situation:

\begin{enumerate}
  \item [$(H_1)$]  $P(0)=0$, $\frac{1}{2}\big<P'(u),u\big>>P(u)>0$ for all $u\neq0$ and $P$ is weakly lower semicontinuous.
  \item [$(H_2)$] For each $w\in X\backslash X^-$, there exists a unique nontrivial critical point of $\widehat{m}(w)$ of $\Phi|_{\widehat{X}(w)}$, which is the unique global maximum of $\Phi|_{\widehat{X}(w)}$.
  \item [$(H_3)$] There exists $\delta>0$ such that $\|\widehat{m}(w)^+\|\geq\delta$ for all $w\in X\backslash X^-$, and for each compact subset $\mathcal{K}\subset X\backslash X^-$, there exists a constant $C_\mathcal{K}$ such that $\|\widehat{m}(w)\|\leq C_\mathcal{K}$.
\end{enumerate}
\par The following set was introduced by Pankov \cite{Pan}:
\begin{equation*}
    \mathcal{M}:=\big\{u\in X\backslash X^-: \big<\Phi'(u),u\big>=0 \,\, \textnormal{and} \,\, \big<\Phi'(u),v\big>=0\,\, \forall v\in X^-\big\}.
\end{equation*}
It is called the generalized Nehari manifold.
\begin{rem}
 \textnormal{By $(H_1)$, $\mathcal{M}$ contains all nontrivial critical points of $\Phi$ and by $(H_2)$, $\widehat{X}(w)\cap\mathcal{M}=\{\widehat{m}(w)\}$ whenever $w\in X\backslash X^-$.}
 \end{rem}
\par We also consider the mappings:
\begin{equation*}
    \widehat{m}:X\backslash X^-\rightarrow \mathcal{M}, \, w\mapsto \widehat{m}(w)\,\,\,\, \textnormal{and }\,\,\, m:=\widehat{m}|_{S^+}.
\end{equation*}
\begin{equation*}
    \widehat{\Psi}:X^+\backslash\{0\}\rightarrow \mathbb{R},\, \widehat{\Psi}(w):=\Phi(\widehat{m}(w)) \,\, \textnormal{ and }\,\, \Psi:=\widehat{\Psi}|_{S^+}.
\end{equation*}
The following result is due to A. Szulkin and T. Weth \big(\cite{S-W}, Corollary $33$\big).
\begin{theo}\label{reduction}
If $(H_1),$ $(H_2)$ and $(H_3)$ are satisfied, then
\begin{itemize}
  \item [(a)] $\Psi\in\mathcal{C}^1(S^+,\mathbb{R})$ and
  \begin{equation*}
    \big<\Psi'(w),z\big>=\|m(w)^+\|\big<\Phi'(m(w)),z\big> \textnormal{ for all }z\in T_w(S),
  \end{equation*}
  where $T_w(S)$ is the tangent space of $S$ at $w$.
  \item [(b)] If $(w_n)$ is a Palais-Smale sequence for $\Psi,$ then $(m(w_n))$ is a Palais-Smale sequence for $\Phi$. If $(u_n)\subset\mathcal{M}$ is a bounded Palais-Smale sequence for $\Phi$, then $(m^{-1}(w_n))$ is a Palais-Smale sequence for $\Psi$.
  \item [(c)] $w$ is a critical point of $\Psi$ if and only if $m(w)$ is a nontrivial critical point of $\Phi$. Moreover, the corresponding critical values coincide and $inf_{S^+}\Psi=inf_\mathcal{M}\Phi$.
\end{itemize}
\end{theo}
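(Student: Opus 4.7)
The plan is to exploit the homeomorphism $m : S^+ \to \mathcal{M}$ produced by $(H_2)$, show that the functional $\Phi$ composed with the section $m$ is $\mathcal{C}^1$ on the Hilbert manifold $S^+$, and transfer variational information back and forth through the explicit formula for $\Psi'$. First I would record the elementary consequence of $(H_2)$ that $\widehat{m}$ is positively $0$-homogeneous, so $\widehat{m}(sw)=\widehat{m}(w)$ for $s>0$, and hence $m := \widehat{m}|_{S^+}$ is a bijection onto $\mathcal{M}$ with inverse $u\mapsto u^+/\|u^+\|$. Combined with $(H_3)$, which provides uniform two-sided bounds on $\|m(w)^+\|$, this gives that both $m$ and $m^{-1}$ are continuous; the continuity of $m$ uses the uniqueness in $(H_2)$ together with weak lower semicontinuity of $P$ from $(H_1)$ to pass to the limit in maximizing sequences.

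Next, I would establish part (a). Write $\widehat{m}(w)= t(w)w+\widehat{m}(w)^-$ for $w\in X^+\setminus\{0\}$, where $t(w)>0$ and $\widehat{m}(w)^-\in X^-$ are determined by $(H_2)$. The key trick is that, for any admissible variation, the derivative of the composition $\widehat{\Psi}(w)=\Phi(\widehat{m}(w))$ collapses to a single term because $\Phi'(\widehat{m}(w))$ vanishes on $\widehat{X}(w)\supset \mathbb{R}w\oplus X^-$ at the maximum. More precisely, given $z\in T_w S^+$ and a $\mathcal{C}^1$ curve $\alpha : (-\varepsilon,\varepsilon)\to S^+$ with $\alpha(0)=w$, $\alpha'(0)=z$, differentiate $\Psi(\alpha(s))=\Phi\bigl(t(\alpha(s))\alpha(s)+\widehat{m}(\alpha(s))^-\bigr)$ at $s=0$: the contributions from $\tfrac{d}{ds}t(\alpha(s))$ lie in $\mathbb{R}w$ and from $\tfrac{d}{ds}\widehat{m}(\alpha(s))^-$ lie in $X^-$, both annihilated by $\Phi'(m(w))$. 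Only the direction $t(w)z$ survives, giving $\langle \Psi'(w),z\rangle = t(w)\langle \Phi'(m(w)),z\rangle = \|m(w)^+\|\langle \Phi'(m(w)),z\rangle$. The regularity $\Psi\in \mathcal{C}^1$ follows because both $t$ and $\widehat{m}(\cdot)^-$ depend $\mathcal{C}^1$-smoothly on $w$ by an implicit function argument applied to the two equations $\partial_t \Phi=0$ and $\Phi'(\cdot)|_{X^-}=0$; the non-degeneracy of the Hessian transverse to $\mathcal{M}$ is ensured by $(H_1)$.

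For part (b), the formula in (a) does most of the work. If $(w_n)$ is Palais--Smale for $\Psi$, then $\Psi(w_n)=\Phi(m(w_n))$ is bounded and, since $m(w_n)\in\mathcal{M}$ already makes $\Phi'(m(w_n))$ vanish on $X(w_n)=\mathbb{R}w_n\oplus X^-$, it remains to control the action of $\Phi'(m(w_n))$ on $T_{w_n}S^+$; the identity $\langle \Psi'(w_n),z\rangle = \|m(w_n)^+\|\langle \Phi'(m(w_n)),z\rangle$ together with the lower bound $\|m(w_n)^+\|\geq \delta$ from $(H_3)$ gives $\Phi'(m(w_n))\to 0$ on $T_{w_n}S^+$, hence on $X=\mathbb{R}w_n\oplus X^- \oplus T_{w_n}S^+$. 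The converse, starting from a bounded Palais--Smale sequence $(u_n)\subset\mathcal{M}$ for $\Phi$, uses the same identity in reverse after noting that $\|u_n^+\|=\|m^{-1}(u_n)\cdot \|u_n^+\|\|$ is bounded above and below (upper bound from boundedness of $(u_n)$, lower bound from $(H_3)$ applied to the compact set obtained from $\{u_n^+/\|u_n^+\|\}$'s closure; alternatively from $(H_1)$'s strict superquadratic condition).

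Finally, part (c) is a direct corollary: by the formula in (a), $\Psi'(w)=0$ iff $\Phi'(m(w))$ vanishes on $T_w S^+$; combined with the $\mathcal{M}$-condition, this is equivalent to $\Phi'(m(w))=0$ on all of $X$, and values match by definition of $\Psi$, whence $\inf_{S^+}\Psi=\inf_\mathcal{M}\Phi$. I expect the main obstacle to lie in the clean derivation of the derivative formula in (a): one has to justify smoothness of the auxiliary maps $w\mapsto t(w)$ and $w\mapsto \widehat{m}(w)^-$ using an implicit function theorem on the Hessian of $\Phi$ restricted to $\widehat{X}(w)$, where the sign information from $(H_1)$ guarantees invertibility. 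Once this is in place, (b) and (c) follow almost formally from the formula and the uniform estimates in $(H_3)$.
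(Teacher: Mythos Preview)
The paper does not prove this theorem; it is quoted verbatim from Szulkin and Weth (\cite{S-W}, Corollary~33) and used as a black box. So there is no ``paper's own proof'' to compare against. That said, your outline follows the Szulkin--Weth strategy in spirit, and parts (b) and (c) are essentially correct once (a) is in hand.

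There is, however, a genuine gap in your argument for (a). You derive the derivative formula by differentiating $s\mapsto \Phi\bigl(t(\alpha(s))\alpha(s)+\widehat{m}(\alpha(s))^-\bigr)$ and then discard the terms coming from $\tfrac{d}{ds}t$ and $\tfrac{d}{ds}\widehat{m}^-$ because they lie in directions annihilated by $\Phi'(m(w))$. But this presupposes that $t(\cdot)$ and $\widehat{m}(\cdot)^-$ are themselves differentiable, and your justification for that is an implicit function theorem applied to the critical-point equations, invoking ``non-degeneracy of the Hessian transverse to $\mathcal{M}$''. The hypotheses $(H_1)$--$(H_3)$ only assume $\Phi\in\mathcal{C}^1$; there is no Hessian available, and $(H_1)$ provides no second-order non-degeneracy. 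So the implicit function argument cannot be carried out under the stated assumptions.

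The fix, which is precisely the point of the Szulkin--Weth approach, is to bypass differentiability of $\widehat{m}$ altogether by a sandwich argument based on maximality. Writing $\widehat{m}(w)=t_0 w+v_0$, the point $t_0(w+sz)+v_0$ lies in $\widehat{X}(w+sz)$, so
\[
\widehat{\Psi}(w+sz)-\widehat{\Psi}(w)\ \ge\ \Phi\bigl(\widehat{m}(w)+s\,t_0 z\bigr)-\Phi\bigl(\widehat{m}(w)\bigr),
\]
and similarly, writing $\widehat{m}(w+sz)=t_s(w+sz)+v_s$, the point $t_s w+v_s\in\widehat{X}(w)$ gives the reverse inequality up to a term $\Phi(\widehat{m}(w+sz))-\Phi(\widehat{m}(w+sz)-s\,t_s z)$. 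Dividing by $s$, using the mean value theorem for $\Phi$, the continuity of $\widehat{m}$, and the uniform bounds on $t_s=\|\widehat{m}(w+sz)^+\|/\|w+sz\|$ supplied by $(H_3)$, one obtains both the formula and the $\mathcal{C}^1$ regularity without ever differentiating $\widehat{m}$. Once you replace the implicit-function step by this comparison argument, the rest of your proof goes through.
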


\subsection{Existence of a ground state}
Throughout this subsection, we assume that $(A_0)$, $(W_1)-(W_3)$ and $(W_6)-(W_{10})$ are satisfied. \\
Here $P$ is given by
\begin{equation*}
    P(u):=\int_\mathbb{R}W(t,u)dt.
\end{equation*}
\begin{lemme}
Condition $(H_1)$ is satisfied.
\end{lemme}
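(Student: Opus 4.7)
The plan is to verify the three parts of $(H_1)$ separately, with the weak lower semicontinuity being the only nontrivial point.

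First, $P(0)=0$ is immediate from $(W_1)$, since $W(t,0)=0$ gives $P(0)=\int_\mathbb{R} W(t,0)\,dt=0$. For the strict positivity $P(u)>0$ whenever $u\neq 0$, I would invoke $(W_8)$, which yields $W(t,u(t))>0$ for every $t$ with $u(t)\neq 0$, i.e.\ on a set of positive Lebesgue measure (since $u\in X\subset L^2$ is nontrivial). Integrating gives $P(u)>0$.

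For the comparison inequality $\frac{1}{2}\langle P'(u),u\rangle>P(u)$, I would compute
\begin{equation*}
\tfrac{1}{2}\langle P'(u),u\rangle-P(u)=\int_{\mathbb{R}}\Big(\tfrac{1}{2}u\cdot\nabla W(t,u)-W(t,u)\Big)\,dt,
\end{equation*}
and note that the integrand is strictly positive on $\{t:u(t)\neq 0\}$ by $(W_8)$. Since the set $\{t:u(t)\neq 0\}$ has positive measure whenever $u\neq 0$ in $X$, the integral is strictly positive. (One also needs that $u\cdot\nabla W(t,u)\in L^1$, which follows from $(W_2)$, $(W_3)$ and $u\in X\hookrightarrow L^2\cap L^p$.)

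The main work lies in the weak lower semicontinuity. Given $u_n\rightharpoonup u$ in $X$, I would extract (after passing to a subsequence) convergence $u_n\to u$ in $L^q_{\mathrm{loc}}(\mathbb{R},\mathbb{R}^{2N})$ for every $q\in[2,\infty)$ and hence a.e.\ pointwise convergence on $\mathbb{R}$, using the local compactness of the Sobolev embedding for $X=H^{1/2}$. By continuity of $W(t,\cdot)$, $W(t,u_n(t))\to W(t,u(t))$ a.e. Since $W\geq 0$ by $(W_8)$, Fatou's lemma applies directly:
\begin{equation*}
\liminf_{n\to\infty}P(u_n)=\liminf_{n\to\infty}\int_{\mathbb{R}}W(t,u_n)\,dt\geq\int_{\mathbb{R}}W(t,u)\,dt=P(u).
\end{equation*}
To upgrade this subsequential statement to the full sequence, the standard subsequence-of-a-subsequence argument works: if $\liminf_n P(u_n)<P(u)$, extract a subsequence realizing the liminf and then apply the above to obtain a contradiction. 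The hard part, if any, is just keeping track of the fact that Fatou requires nonnegativity, which is exactly what $(W_8)$ provides, so no delicate truncation argument is needed.
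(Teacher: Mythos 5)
Your argument matches the paper's proof in all essentials: the first two parts of $(H_1)$ follow directly from $(W_1)$ and $(W_8)$, and weak lower semicontinuity is obtained exactly as you describe via the compact embedding $X\hookrightarrow L^2_{loc}$, a.e.\ convergence along a subsequence, and Fatou's lemma (legitimate because $W\geq0$ by $(W_8)$ and $(W_1)$). The paper is terser (dismissing the first part with ``clearly'' and formulating lower semicontinuity through sublevel sets rather than a $\liminf$), but the underlying reasoning is identical.
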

\begin{proof}
Clearly we have $P(0)=0$ and $\frac{1}{2}\big<P'(u),u\big>\geq P(u)>0$ for any $u\neq0$. Let $(u_n)\subset X$ and $C\in\mathbb{R}$ such that $u_n\rightharpoonup u$ and $N(u_n)\leq C$. Since the embedding of $X$ in $L^2_{loc}(\mathbb{R},\mathbb{R}^{2N})$ is compact, we have $u_n\rightarrow u$ in $L^2_{loc}(\mathbb{R},\mathbb{R}^{2N})$ and up to a subsequence $u_n\rightarrow u$ a.e.. It then follows from Fatou's lemma that $P(u)\leq C$. Hence $P$ is weakly lower semicontinuous.
\end{proof}
\begin{lemme}
Condition $(H_2)$ is satisfied.
\end{lemme}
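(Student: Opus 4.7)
The plan is to realize $(H_2)$ as a one-dimensional saddle problem on $\widehat{X}(w)$. Since $\widehat{X}(w) = \mathbb{R}^+ w^+ \oplus X^-$ depends only on the direction of $w^+$, I will assume $w = w^+ \in S^+$ and parametrize $\widehat{X}(w)$ by $(s,v)\in[0,\infty)\times X^-$ via $u = sw+v$, so that
\[
\Phi(sw+v) = \tfrac{1}{2}s^2 - \tfrac{1}{2}\|v\|^2 - \int_{\mathbb{R}} W(t,sw+v)\,dt.
\]

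First I will establish existence of a nontrivial maximizer. The key ingredient is coercivity on $\widehat{X}(w)$: if $u_n = s_n w + v_n$ satisfies $\|u_n\|\to\infty$, then normalizing $\bar u_n := u_n/\|u_n\| = \bar s_n w + \bar v_n$, I discriminate on the limit $\bar s_0$ of (a subsequence of) $\bar s_n$. If $\bar s_0 = 0$ then $\|\bar v_n\|^2 \to 1$, forcing $\Phi(u_n)/\|u_n\|^2 \le -\tfrac{1}{2} + o(1)$, whereas if $\bar s_0 > 0$ then the weak limit $\bar u_0 = \bar s_0 w + \bar v_0$ is nonzero and on the set $\{\bar u_0 \neq 0\}$ one has $|u_n(t)|\to\infty$, so $(W_7)$ combined with Fatou's lemma gives $\int W(t,u_n)/\|u_n\|^2 \to \infty$. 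In either case $\Phi(u_n)\to -\infty$. Together with the weak upper semicontinuity of $\Phi$ on $\widehat{X}(w)$---which follows from weak lower semicontinuity of $P$ established in the preceding lemma---this yields a maximizer $u_0 = s_0 w + v_0$. By $(W_2)$ and $(W_3)$ one has $\Phi(sw)\ge s^2/4$ for $s>0$ small, hence $\sup_{\widehat{X}(w)}\Phi>0$; and $(W_8)$ gives $\Phi|_{X^-}\le 0$, which forces $s_0>0$. Therefore $u_0$ is an interior critical point and $u_0\in\mathcal{M}$.

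For uniqueness, I will suppose $u_1 = s_1 w + v_1 \in\widehat{X}(w)\setminus\{u_0\}$ is another critical point, write $u_1 - u_0 = r u_0 + \beta$ with $r = s_1/s_0 - 1 \ge -1$ and $\beta = v_1 - (1+r) v_0 \in X^-$, and follow the calculation in the proof of Lemma \ref{lem}: using $\langle\Phi'(u_0),(r+r^2/2)u_0+(1+r)\beta\rangle = 0$ to cancel the quadratic terms, I obtain
\[
\Phi(u_1)-\Phi(u_0) = -\tfrac{1}{2}\|\beta\|^2 + \int_{\mathbb{R}}\left[\bigl((r+\tfrac{r^2}{2})u_0 + (1+r)\beta\bigr)\cdot\nabla W(t,u_0) + W(t,u_0) - W(t,u_1)\right]dt.
\]
The pointwise sign analysis from the proof of Lemma \ref{lem}, invoking $(W_6)$, $(W_8)$, $(W_9)$ and $(W_{10})$, then shows that the integrand is strictly negative whenever $u_1 \neq u_0$. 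Hence $\Phi(u_1) < \Phi(u_0)$, and exchanging the roles of $u_0$ and $u_1$ forces $u_0 = u_1$.

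The main obstacle is the uniqueness step. Since $W$ is not assumed convex, the classical Nehari monotonicity is unavailable; the strict Szulkin--Weth-type inequalities $(W_9)$ (when $|u_0|=|u_1|$) and $(W_{10})$ (when $|u_0|\neq|u_1|$ and $u_0\cdot u_1>0$) are precisely what force the pointwise integrand above to be strictly negative off the diagonal. The other ingredients---coercivity, weak upper semicontinuity, and the interior location of the maximizer---are routine.
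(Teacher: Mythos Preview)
Your proposal is correct and follows essentially the same route as the paper. Both arguments normalize and split into the cases where the one-dimensional coefficient tends to zero (yielding a negative quadratic term) or stays positive (yielding a nonzero weak limit and the Fatou/$(W_7)$ blow-up), then use weak upper semicontinuity together with $\Phi(sw)>0$ for small $s$ to locate an interior maximizer, and finally reduce uniqueness to the pointwise inequality of Lemma~\ref{lem}; your ``exchange the roles'' closing is equivalent to the paper's direct statement that any element of $\mathcal{M}\cap\widehat{X}(w)$ is the unique global maximum.
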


\begin{proof}
 Let $w\in X\backslash X^-.$ Then there exists $R$ large enough such that $\Phi\leq0$ on $\widehat{X}(w)\backslash B_R$, where $B_R:=\{u\in X\, |\, \|u\|\leq R\}$. In fact, if this is not true then there exists a sequence $(u_n)\subset\widehat{X}(w)$ such that $\|u_n\|\rightarrow\infty$ and $\Phi(u_n)>0$. Up to a subsequence we have $v_n=u_n/\|u_n\|\rightharpoonup v$ in $X$.
              By \eqref{phi} we have
              \begin{equation*}
                0<\frac{\Phi(u_n)}{\|u_n\|^2}=\frac{1}{2}\|v_n^+\|^2-\frac{1}{2}\|v_n^-\|^2-\int_\Omega\frac{F(x,\|u_n\|v_n)}{\big|v_n\|u_n\|\big|^2}|v_n|^2.
              \end{equation*}
If $v\neq0$, we deduce by using Fatou's Lemma and $(W_7)$ that $0\leq-\infty$; a contradiction. Consequently $v=0$. Since $\widehat{X}(w)=\widehat{X}(w^+/\|w^+\|)$, we may assume that $w\in S^+$. Now, since $P(u_n)\geq0$ and $1=\|v_n^+\|^2+\|v_n^-\|^2$, then necessarily $v_n^+=s_nw\nrightarrow0$. Hence there is $r>0$ such that $\|v_n^+\|=\|s_nw\|>r$ $\forall n$. So $\|v_n^+\|=s_n$ is bounded and bounded away from $0$. But then, up to a subsequence, $v_n^+\rightarrow sw,$ $s>0$, which contradicts the fact that $v_n\rightharpoonup0$.\\
By $(W_3)$, $\Phi(sw)=\frac{1}{2}s^2+\circ(s^2)$ as $s\rightarrow0$. Hence $0<\sup_{\widehat{X}(w)}\Phi<\infty$. Since $\Phi$ is weakly upper semicontinuous on $\widehat{X}(w)$ and $\Phi\leq0$ on $\widehat{X}(w)\cap X^-$, the supremum  is attained at some point $u_0$ such that $u_0^+\neq0$. So $u_0$ is a nontrivial critical point of $\Phi|_{\widehat{X}(w)}$ and hence $u_0\in \mathcal{M}$.
\par We will now show that if $u\in \mathcal{M}$, then $u$ is the unique global maximum of $\Phi|_{\hat{X}(u)}$.
Let $u\in\mathcal{M}$ and $w=u+w\in\hat{X}(u)$ with $w\neq0$. By the definition of $\hat{X}(u)$, we have $u+w=(1+s)u+v$, with $s\geq-1$ and $v\in X^-$. Using the argument in the proof of Lemma \ref{lem}, we see that $\Phi(u+w)<\Phi(u)$.
\end{proof}
\begin{lemme}
Condition $(H_3)$ is satisfied.
\end{lemme}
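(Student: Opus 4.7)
The plan is to verify the two parts of $(H_3)$ separately, both times exploiting the Nehari identities $\langle\Phi'(u),u^+\rangle=0$ and $\langle\Phi'(u),u^-\rangle=0$ satisfied by any $u=\widehat{m}(w)\in\mathcal{M}$. The decisive consequence of $(W_8)$ is that subtracting these two identities gives
\[
\|u^+\|^2-\|u^-\|^2=\int_{\mathbb{R}} u\cdot\nabla W(t,u)\,dt>0,
\]
the integrand being strictly positive wherever $u\neq 0$ (by $(W_8)$, with $(W_3)$ handling points where $u$ vanishes). In particular $\|u\|\leq\sqrt{2}\,\|u^+\|$ for every $u\in\mathcal{M}$, which is the structural fact that makes both estimates below work.

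For the lower bound $\|\widehat{m}(w)^+\|\geq\delta$, I would start from $\|u^+\|^2=\int_{\mathbb{R}} u^+\cdot\nabla W(t,u)\,dt$ (the $X^+$-component of the Nehari identity). Inserting the pointwise bound $|\nabla W(t,u)|\leq\varepsilon|u|+c_\varepsilon b(t)|u|^{p-1}$ from $(W_2)$--$(W_3)$, applying the three-exponent H\"older inequality with $\tfrac{1}{r}+\tfrac{1}{s}+\tfrac{p-1}{s}=1$ (as in Lemma \ref{beta}) and the Sobolev embedding $X\hookrightarrow L^q$ give
\[
\|u^+\|^2\leq\varepsilon c_1\,\|u^+\|\,\|u\|+c_\varepsilon c_2\,\|u^+\|\,\|u\|^{p-1}.
\]
Substituting $\|u\|\leq\sqrt{2}\,\|u^+\|$, dividing by $\|u^+\|$, and choosing $\varepsilon$ so small that $\varepsilon c_1\sqrt{2}\leq\tfrac{1}{2}$ absorbs the linear term on the left, leaving $\|u^+\|^{p-2}\geq\delta^{p-2}$ for an explicit $\delta>0$ independent of $w$.

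For the uniform upper bound on a compact set $\mathcal{K}\subset X\setminus X^-$, I would argue by contradiction. If the bound fails, there is a sequence $(w_n)\subset\mathcal{K}$ with $w_n\to w\in\mathcal{K}$ (hence $\|w_n^+\|\to\|w^+\|>0$) and $\|u_n\|\to\infty$, where $u_n=\widehat{m}(w_n)$; since $\|u_n^+\|\geq\|u_n\|/\sqrt{2}$, also $\|u_n^+\|\to\infty$. Writing $u_n=\lambda_n w_n+z_n$ with $\lambda_n>0$ and $z_n\in X^-$, the rescaling $v_n:=u_n/\|u_n^+\|$ satisfies $v_n^+=w_n^+/\|w_n^+\|\to w^+/\|w^+\|\neq 0$ in $X$ and $\|v_n^-\|\leq 1$. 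Extracting a subsequence, $v_n\rightharpoonup v$ in $X$ with $v^+\neq 0$, and $v_n(t)\to v(t)$ a.e. On the positive-measure set $\{t:v(t)\neq 0\}$, $|u_n(t)|=\|u_n^+\|\,|v_n(t)|\to\infty$, so by $(W_7)$ and $\gamma>2$,
\[
\frac{W(t,u_n)}{\|u_n^+\|^2}=\frac{W(t,u_n)}{|u_n|^\gamma}\,\|u_n^+\|^{\gamma-2}\,|v_n|^\gamma\longrightarrow\infty.
\]
Fatou's lemma then forces $\int_{\mathbb{R}} W(t,u_n)/\|u_n^+\|^2\,dt\to\infty$, which contradicts
\[
0\leq\frac{\Phi(u_n)}{\|u_n^+\|^2}=\frac{1}{2}-\frac{1}{2}\|v_n^-\|^2-\int_{\mathbb{R}}\frac{W(t,u_n)}{\|u_n^+\|^2}\,dt,
\]
the inequality $\Phi(u_n)\geq\Phi(0)=0$ holding because $0\in\widehat{X}(w_n)$ and $\widehat{m}(w_n)$ is the global maximum of $\Phi$ on $\widehat{X}(w_n)$.

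The delicate step is the lower bound: without first using $(W_8)$ to control $\|u\|$ by $\|u^+\|$, the growth estimate on $\nabla W$ alone only produces $\|u^+\|\leq\varepsilon c\|u\|+c_\varepsilon C\|u\|^{p-1}$, which on its own cannot prevent $\|u^+\|$ from approaching $0$ along $\mathcal{M}$.
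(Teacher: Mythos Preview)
Your proof is correct; both halves of $(H_3)$ are established. The upper-bound part is essentially the paper's argument with a cosmetic change of normalization (you divide by $\|u_n^+\|$, the paper by $\|u_n\|$), and the Fatou/$(W_7)$ contradiction is identical in spirit.

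The lower bound, however, follows a genuinely different route from the paper. You exploit the Nehari identity $\|u^+\|^2=\int u^+\cdot\nabla W(t,u)\,dt$ together with the growth control from $(W_2)$--$(W_3)$ and the structural inequality $\|u\|\leq\sqrt{2}\,\|u^+\|$ (a direct consequence of $(W_8)$); this yields a purely algebraic lower bound on $\|u^+\|$. The paper instead observes that $\Phi\geq\eta>0$ on a small sphere $S_\rho\subset X^+$ (a mountain-pass type estimate from $(W_2)$--$(W_3)$), and then uses the \emph{maximum property} from $(H_2)$: since $\rho\,w^+/\|w^+\|\in\widehat{X}(w)\cap S_\rho$, one gets $\Phi(\widehat{m}(w))\geq\eta$, whence $\|\widehat{m}(w)^+\|\geq\sqrt{2\eta}$ because $P\geq 0$. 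Your argument is more self-contained (it does not call on $(H_2)$), while the paper's is slightly shorter and reuses the positivity of $\Phi$ on $S_\rho$ that is already needed elsewhere (Lemma~\ref{cont}).
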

\begin{proof}
$(W_3)$ and $(H_1)$ imply that
\begin{equation*}
    \forall \varepsilon>0,\,\exists\alpha>0,\, |u|<\alpha\, \Longrightarrow\, P(u)<\frac{1}{2}\big<P'(u),u\big>\leq\frac{\varepsilon}{2}\|u\|^2.
\end{equation*}
Hence, there exist $\rho,\eta>0$ such that $\Phi(w)\geq\eta$ for every $w\in\{u\in X^+\,|\,\|u\|=\rho\}$. By $(H_2)$, we have $\Phi(\hat{m}(w))\geq\eta$ for every $w\in X\backslash X^-$. Since $P\geq0$, we deduce from \eqref{phi} that $\|\hat{m}(w)^+\|\geq\sqrt{2\eta}$ $\forall w\in X\backslash X^-$.
\par Now let $\mathcal{K}$ be a compact subset of $X\backslash X^-$. We claim that there exists a constant $C_\mathcal{K}$ such that $\|\widehat{m}(w)\|\leq C_\mathcal{K}$, $\forall w\in\mathcal{K}$. In fact, if the claim is not true, then we can find a subsequence $(w_n)\subset\mathcal{K}$ such that $\|\widehat{m}(w_n)\|\rightarrow\infty$ as $n\rightarrow\infty$. Since $\widehat{m}(w)=\widehat{m}(w^+/\|w^+\|)$ $\forall w\in X\backslash X^-$, we may assume that $\mathcal{K}\subset S^+$. By using the fact that $\widehat{m}(w_n)\in\mathcal{M}$, one can verify easily that $\Phi(\widehat{m}(w_n))>0$. Define $y_n=\widehat{m}(w_n)/\|\widehat{m}(w_n)\|$. Then we have
\begin{equation*}
    0\leq \frac{\Phi(\widehat{m}(w_n))}{\|\widehat{m}(w_n)\|^2}=\frac{1}{2}\Big(\frac{\|\widehat{m}(w_n)^+\|^2}{\|\widehat{m}(w_n)\|^2}-\frac{\|\widehat{m}(w_n)^-\|^2}{\|\widehat{m}(w_n)\|^2}\Big)-
    \int_\mathbb{R}\frac{W(t,y_n\|\widehat{m}(w_n)\|)}{|y_n\|\widehat{m}(w_n)\||^2}|y_n|^2dt.
\end{equation*}
Since $y_n\in \hat{X}(u)$, then $y_n=s_nw_n+v_n$, with $s_n\geq0$ and $v_n\in X^-$. It follows that
\begin{equation}\label{eq9}
    0\leq \frac{\Phi(\widehat{m}(w_n))}{\|\widehat{m}(w_n)\|^2}=\frac{1}{2}\big(\lambda_n^2-\|v_n\|^2\big)-
    \int_\mathbb{R}\frac{W(t,y_n\|\widehat{m}(w_n)\|)}{|y_n\|\widehat{m}(w_n)\||^2}|y_n|^2dt.
\end{equation}
Since $W\geq0$, we deduce that $s_n^2\geq\|v_n\|^2$ and then $\frac{1}{\sqrt{2}}\leq s_n\leq1$. Up to a subsequence, $s_n\rightarrow s>0$ and $w_n\rightarrow w\in S^+$. Hence $y_n\rightharpoonup y\neq0$. If we take the limit $n\rightarrow \infty$ in \eqref{eq9}, we obtain by using $(W_7)$ and Fatou's lemma the contradiction $0\leq-\infty$.
\end{proof}
\begin{lemme}\label{cont}
There exists $\alpha>0$ such that
\begin{equation*}
    c=\inf_{\substack{\mathcal{M}}}\Phi\geq\inf_{\substack{S_\alpha}}\Phi>0,
\end{equation*}
where $S_\alpha:=\{u\in X^+\,;\, \|u\|=\alpha\}$.
\end{lemme}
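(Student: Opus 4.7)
The plan is to bracket $c=\inf_{\mathcal{M}}\Phi$ from below by first showing that $\inf_{S_\alpha}\Phi>0$ for a suitably small radius $\alpha>0$, and then using the fact that every point of $\mathcal{M}$ is the global maximum of $\Phi$ on its associated cone $\widehat{X}(\cdot)$ in order to exhibit a comparison point lying in $S_\alpha$.

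For the first step, $(W_2)$ and $(W_3)$ give, for every $\varepsilon>0$, a constant $c(\varepsilon)>0$ such that $|\nabla W(t,u)|\le \varepsilon|u|+c(\varepsilon)|u|^{p-1}$, and integration along the segment $[0,u]$ yields
\begin{equation*}
   W(t,u)\;\le\;\tfrac{\varepsilon}{2}|u|^2+\tfrac{c(\varepsilon)}{p}|u|^p.
\end{equation*}
For $u\in X^+$ one has $u^-=0$, so by the definition of $\Phi$ and the continuous Sobolev embeddings $X\hookrightarrow L^2\cap L^p$,
\begin{equation*}
    \Phi(u)\;\ge\;\tfrac12\|u\|^2-\tfrac{c_1\varepsilon}{2}\|u\|^2-c_2c(\varepsilon)\|u\|^p.
\end{equation*}
Fixing $\varepsilon$ so that $c_1\varepsilon<\tfrac12$ and then $\alpha$ so small that $c_2c(\varepsilon)\alpha^{p-2}\le \tfrac18$ gives $\Phi|_{S_\alpha}\ge \alpha^2/8>0$.

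For the second step, let $u\in\mathcal{M}$. Since $\mathcal{M}\subset X\setminus X^-$, one has $u^+\ne 0$. The verification of $(H_2)$ already carried out in the previous lemma, together with the remark following the definition of $\mathcal{M}$, yields $\widehat{m}(u)=u$, so that $u$ is the unique global maximum of $\Phi$ on $\widehat{X}(u)=\mathbb{R}^+u^+\oplus X^-$. Since $tu^+\in\widehat{X}(u)$ for every $t\ge 0$, this maximality gives $\Phi(u)\ge\Phi(tu^+)$ for all $t\ge 0$. Choosing $t_0:=\alpha/\|u^+\|$ produces $t_0u^+\in S_\alpha$, and hence
\begin{equation*}
    \Phi(u)\;\ge\;\Phi(t_0u^+)\;\ge\;\inf_{S_\alpha}\Phi.
\end{equation*}
Passing to the infimum over $u\in\mathcal{M}$ yields $c\ge\inf_{S_\alpha}\Phi>0$.

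The only nonroutine ingredient is the global maximum property invoked in the second step, but this is exactly the content of $(H_2)$ already verified in the previous lemma, so I do not anticipate any substantive obstacle; the remainder of the argument is the standard superquadratic bookkeeping via the Sobolev embedding constants on $X^+$.
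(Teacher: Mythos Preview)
Your proof is correct and follows essentially the paper's approach: the paper's own proof carries out only your first step (deriving $\Phi(u)\ge \tfrac14\|u\|^2-C\|u\|^p$ on $X^+$ from \eqref{eq2} and then taking $\alpha$ small), leaving the comparison $\inf_{\mathcal{M}}\Phi\ge\inf_{S_\alpha}\Phi$ entirely implicit. Your second step, which supplies this comparison via the global-maximum property $(H_2)$ on $\widehat{X}(u)$, is exactly the intended argument and is a welcome clarification rather than a departure.
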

\begin{proof}
We remark that we can choose $\varepsilon$ in \eqref{eq2} in such a way that
\begin{equation*}
    \Phi(u)\geq \frac{1}{4}\|u\|^2-C\|u\|^p,\quad \forall u\in X^+.
\end{equation*}
It suffices to take $\alpha$ sufficiently small.
\end{proof}
\begin{proof}[\textbf{Proof of Theorem \ref{theo3}}]
By the preceding lemmas we know that $(H_1)$, $(H_2)$ and $(H_3)$ are satisfied. By Theorem \ref{reduction}-$(a)$, $\Psi\in\mathcal{C}^1(S^+,\mathbb{R})$. The Ekeland variational principle \cite{W} then gives the existence of a sequence $(w_n)\subset S^+$ such that $\Psi(w_n)\rightarrow\inf_{S^+}\Psi$. By Theorem \ref{reduction}-$(b)$, $\big(u_n:=\hat{m}(w_n)\big)$ is a Palais-Smale sequence for $\Phi$ on $\mathcal{M}$. By using the argument in the proof of Lemma \ref{bounded}, we show that $(u_n)$ is bounded. Up to a subsequence, $u_n\rightharpoonup u$ in $X$. We claim that $u_n\nrightarrow0$ in $L^p(\mathbb{R},\mathbb{R}^{2N})$. In fact, if this is not true, then we deduce from \eqref{eq2} that $\int_\mathbb{R}u_n^+\cdot W(t,u_n)dt\rightarrow0$ as $n\rightarrow\infty$. It follows that
 \begin{equation*}
    \|u_n^+\|^2=\big<\Phi'(u_n),u_n^+\big>-\int_\mathbb{R}u_n^+\cdot W(t,u)dt\rightarrow0 \quad \textnormal{as }n\rightarrow\infty.
 \end{equation*}
 But since $\Phi(u_n)\leq \frac{1}{2}\|u_n^+\|^2$, we deduce that $\liminf_{n\rightarrow\infty}\Phi(u_n)=0$, which contradicts Lemma \ref{cont}. Hence $u_n\nrightarrow0$ in $L^p(\mathbb{R},\mathbb{R}^{2N})$.\\
 By Lemma \ref{lions}, there exist $\delta>0$ and $(a_n)\subset\mathbb{R}_+$ such that
 \begin{equation*}
    \int_{-a_n}^{a_n}|u_n|^2dt\geq\delta.
 \end{equation*}
 Up to translation and a subsequence, we deduce that $u\neq0$. Now since $\Phi'$ is weakly sequentially continuous, we obtain $\Phi'(u)=0$, that is, $u$ is a non trivial weak solution of \eqref{hamilto}. On the other hand, we may assume that $u_n\rightarrow u$ a.e., which together with $(W_8)$ and Fatou's lemma imply, since
 \begin{equation*}
    \Phi(u_n)-\frac{1}{2}\big<\Phi'(u_n),u_n\big>\, =\, \int_\mathbb{R}\Big(\frac{1}{2}u_n\cdot\nabla W(t,u_n)-W(t,u_n)\Big)dt,
 \end{equation*}
that
\begin{equation*}
    \inf_{\substack{\mathcal{M}}}\Phi\geq\int_\mathbb{R}\big(\frac{1}{2}u\cdot\nabla W(t,u)-W(t,u)\big)dt=\Phi(u)-\frac{1}{2}\big<\Phi'(u),u\big>.
\end{equation*}
This implies $\Phi(u)\leq \inf_{\substack{\mathcal{M}}}\Phi$. Since $u\in \mathcal{M}$, the reverse inequality also holds and therefore
\begin{equation*}
    \Phi(u)= \inf_{\substack{\mathcal{M}}}\Phi.
\end{equation*}
\end{proof}

\end{document}